\documentclass[11pt]{article}

\usepackage[margin=1in]{geometry}
\usepackage[T1]{fontenc}
\usepackage{lmodern}
\usepackage{amsmath,amssymb,amsthm,mathtools,bm,array}
\usepackage{enumitem}
\usepackage{microtype}
\usepackage[round,authoryear]{natbib}
\usepackage[colorlinks=true,linkcolor=blue,citecolor=blue,urlcolor=blue]{hyperref}
\usepackage[capitalise,noabbrev]{cleveref}

\newtheorem{theorem}{Theorem}[section]
\newtheorem{proposition}[theorem]{Proposition}
\newtheorem{lemma}[theorem]{Lemma}
\newtheorem{corollary}[theorem]{Corollary}

\crefname{theorem}{Theorem}{Theorems}
\Crefname{theorem}{Theorem}{Theorems}
\crefname{proposition}{Proposition}{Propositions}
\Crefname{proposition}{Proposition}{Propositions}
\crefname{lemma}{Lemma}{Lemmas}
\Crefname{lemma}{Lemma}{Lemmas}
\crefname{corollary}{Corollary}{Corollaries}
\Crefname{corollary}{Corollary}{Corollaries}
\crefname{appendix}{Appendix}{Appendices}
\Crefname{appendix}{Appendix}{Appendices}

\newcommand{\mP}{\mathbb P}
\newcommand{\mE}{\mathbb E}
\newcommand{\R}{\mathbb R}
\newcommand{\N}{\mathbb N}
\newcommand{\calA}{\mathcal A}
\newcommand{\calE}{\mathcal E}
\newcommand{\calG}{\mathcal G}
\newcommand{\calH}{\mathcal H}
\newcommand{\calP}{\mathcal P}
\newcommand{\calR}{\mathcal R}
\newcommand{\calX}{\mathcal X}
\newcommand{\KL}{\operatorname{KL}}
\newcommand{\bkl}{\operatorname{kl}}
\newcommand{\TV}{\operatorname{TV}}
\newcommand{\Bin}{\operatorname{Bin}}
\newcommand{\Mult}{\operatorname{Mult}}
\newcommand{\sech}{\operatorname{sech}}
\newcommand{\arctanh}{\operatorname{arctanh}}
\newcommand{\dd}{\mathrm d}

\title{\LARGE High-Confidence Minimax Testing with Prescribed Errors}
\author{Ilmun Kim\\
Department of Mathematical Sciences \\ KAIST \\[1ex] \texttt{ilmunk@kaist.ac.kr}}
\date{\today}

\begin{document}
\maketitle

\begin{abstract}
Classical minimax lower bounds for testing are typically derived for fixed error probabilities, while high-confidence results often impose a common failure probability. We study prescribed-error testing, in which the level and the target type II error may be small and of different orders. Standard prior-based reductions generally aggregate the two errors into a single quantity and therefore do not capture their distinct roles. We develop a general lower-bound technique based on a binary reduction that preserves the separate roles of the two error targets. The reduction yields two directed Kullback--Leibler information requirements, corresponding respectively to the level and the target type II error. When both directed mixture divergences can be controlled, they combine into a binary Jeffreys divergence, leading to the logarithmic dependence on the level and the target type II error. Applying the framework to Gaussian sequence testing, multinomial uniformity testing, and continuous uniformity testing over H\"older balls, we obtain lower bounds that match corresponding high-confidence upper bounds and hence establish prescribed-error minimax rates sharp up to constant factors.
\end{abstract}

\section{Introduction}

\subsection{Prescribed-error testing}

We study minimax testing of a null class $\calP_0$ against an alternative class $\calP_1$ on a common measurable space $(\calX,\calA)$. Throughout, a possibly randomized test is a measurable function $\phi:\calX\to[0,1]$, interpreted as the conditional probability of rejecting the null hypothesis. For prescribed error targets $(\alpha,\beta)\in(0,1)^2$, we ask whether there exists a test that controls the type I and type II errors uniformly over the null and alternative classes:
\begin{align*}
  \sup_{P\in\calP_0}\int\phi\,\dd P\le \alpha,
  \qquad
  \sup_{P\in\calP_1}\int(1-\phi)\,\dd P\le \beta.
\end{align*}
Classical minimax testing theory \citep[e.g.,][]{Ingster1993,Spokoiny1996,IngsterSuslina2003} typically treats the type I and type II errors as fixed constants and studies how the required separation scales with the sample size, dimension, or smoothness of the model. Here, by contrast, $\alpha$ and $\beta$ are finite-sample design parameters that may be small and of different orders. Our objective is to determine how each target enters the minimax critical radius.

The Neyman--Pearson formulation already distinguishes the two errors: it fixes the level and maximizes power, or equivalently minimizes the type II error, subject to the level constraint \citep{NeymanPearson1933}. For example, the level may remain at a conventional value such as $\alpha=0.05$, while $\beta$ tends to zero. Conversely, one may require a vanishing level while keeping the target power fixed. Thus $\alpha$ and $\beta$ need not be comparable, suggesting that minimax separation may depend on them in qualitatively different ways.

Recent work has made the dependence on confidence explicit in several related settings. For simple-versus-simple testing with independent observations, \citet{PensiaJogLoh2024} characterize, up to universal constant factors, the sample complexity for possibly unequal type I and type II error targets. In distribution testing, \citet{DiakonikolasGouleakisPeeblesPrice2018} and \citet{DiakonikolasGouleakisKanePeeblesPrice2021} establish sharp high-confidence sample complexities for identity testing and related discrete problems, formulated in terms of a common failure probability. The classification approach of \citet{GerberHanPolyanskiy2023} gives minimax high-probability sample complexities, again under a common error probability, for discrete distributions, smooth densities, and Gaussian sequence models. In estimation, \citet{MaVerchandSamworth2026} develop high-probability analogues of classical Le Cam and Fano methods for lower-bounding minimax quantiles. Taken together, this literature highlights the role of explicit confidence parameters in nonasymptotic inference. We pursue this viewpoint in composite minimax testing. A central difficulty is that standard prior-based lower-bound techniques typically aggregate the two error probabilities and therefore do not reveal how the level and the target type II error contribute separately to minimax separation. We overcome this difficulty by introducing a binary reduction that preserves the separate roles of the level and the target type II error.

\subsection{A binary reduction for prescribed errors}

Many standard prior-based lower-bound arguments aggregate the level and the target type II error into the single quantity $\alpha+\beta$, as can be seen from the standard total-variation reduction. Let $\Pi_0$ and $\Pi_1$ be priors on the null and alternative classes, respectively, and let $Q_0$ and $Q_1$ denote the induced mixture laws. For any test $\phi$ with level at most $\alpha$ uniformly over $\calP_0$, its uniform type II error over $\calP_1$ is bounded below by
\begin{align*}
  \sup_{P\in\calP_1}\int(1-\phi)\,\dd P
  \ge
  \int(1-\phi)\,\dd Q_1
  \ge
  1-\alpha-\TV(Q_0,Q_1),
\end{align*}
where $\TV(Q_0,Q_1)=\sup_{A\in\calA}|Q_0(A)-Q_1(A)|$ is the total variation distance. Therefore, any total-variation argument proving that every level-$\alpha$ test has type II error at least $\beta$ must establish
\begin{align*}
  \TV(Q_0,Q_1)\le 1-(\alpha+\beta).
\end{align*}
The resulting condition depends on the prescribed errors only through their sum. In particular, a total-variation lower bound cannot distinguish the separate roles of the level and the target type II error. This limitation is especially apparent when $\alpha$ is fixed while $\beta\to0$: the requirement $\TV(Q_0,Q_1)\le 1-(\alpha+\beta)$ approaches the fixed bound $\TV(Q_0,Q_1)\le 1-\alpha$, and therefore does not reflect the increasingly stringent power requirement.

The usual chi-square second-moment method inherits the same limitation, since it proceeds by controlling total variation through the chi-square divergence. Classical Ingster-type minimax lower bounds \citep[e.g.,][]{Ingster1993,IngsterSuslina2003} are often analyzed through such total-variation and second-moment reductions. This aggregation phenomenon motivates our binary reduction. Rather than collapsing the two prescribed errors into the single quantity $\alpha+\beta$, the reduction yields two directional Kullback--Leibler (KL) requirements, namely, one associated with the level constraint and the other with the target type II error. These directional requirements may be used separately, or combined into a binary Jeffreys divergence that still retains the distinct roles of the two error targets.

A key advantage of this formulation is that many classical least-favourable constructions can be adapted to control the required directional divergences. We demonstrate this in three standard settings, namely, Gaussian sequence testing, multinomial uniformity testing, and continuous uniformity testing over H\"older balls. In each case, the resulting lower bounds match corresponding upper bounds and therefore yield prescribed-error minimax rates that are sharp up to constant factors.

\section{Lower bounds with prescribed type I and type II errors}
\label{sec:lower-bound}
We use the notation from the introduction. If $\Pi_0$ and $\Pi_1$ are priors on $\calP_0$ and $\calP_1$, respectively, write $Q_i=\int P\,\Pi_i(\dd P)$ for the induced mixture laws, and write $P\phi=\int\phi\,\dd P$ for test expectations. For $u,v\in(0,1)$, write
\begin{align*}
  \bkl(u\|v)
  =
  u\log\frac uv+(1-u)\log\frac{1-u}{1-v}
\end{align*}
for the binary KL divergence. We extend $\bkl$ to $[0,1]^2$ using the usual conventions $0\log 0=0$ and $c\log(c/0)=+\infty$ for $c>0$. In the nontrivial regime $\alpha+\beta<1$, define
\begin{equation}
\label{eq:Jdef}
J_{\alpha,\beta}
=
\bkl(1-\alpha\|\beta)
+
\bkl(1-\beta\|\alpha)
= (1-\alpha-\beta)\log\frac{(1-\alpha)(1-\beta)}{\alpha\beta},
\end{equation}
which is the Jeffreys divergence between $\operatorname{Bern}(1-\alpha)$ and $\operatorname{Bern}(\beta)$.

The next theorem explains why the binary Jeffreys divergence arises in prescribed-error testing. A test induces a Bernoulli decision rule, leading to two directional KL requirements below.

\begin{theorem}[Mixture lower bound for prescribed errors]
\label{thm:mixture-lower}
Let $\calP_0$ be a null class and let $\calP_1$ be a collection of alternative laws on $(\calX,\calA)$. Fix $\alpha,\beta\in(0,1)$ with $\alpha+\beta<1$. If there exists a possibly randomized test $\phi:\calX\to[0,1]$ such that
\begin{align*}
  \sup_{P\in\calP_0}P\phi\le \alpha,
  \qquad
  \sup_{P\in\calP_1}P(1-\phi)\le \beta,
\end{align*}
then, for every prior $\Pi_0$ supported on $\calP_0$ and every prior $\Pi_1$ supported on $\calP_1$, with corresponding mixtures $Q_0$ and $Q_1$,
\begin{equation}
\label{eq:onesided-main}
\KL(Q_0\|Q_1)\ge \bkl(1-\alpha\|\beta),
\qquad
\KL(Q_1\|Q_0)\ge \bkl(1-\beta\|\alpha).
\end{equation}
Consequently,
\begin{equation}
\label{eq:twosided-main}
\KL(Q_0\|Q_1)+\KL(Q_1\|Q_0)\ge J_{\alpha,\beta}.
\end{equation}
Equivalently, if some prior pair violates at least one inequality in \Cref{eq:onesided-main}, and in particular if it violates \Cref{eq:twosided-main}, then every test with level at most $\alpha$ uniformly over $\calP_0$ has type II error larger than $\beta$ at some element of $\calP_1$.
\end{theorem}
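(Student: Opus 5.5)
The plan is to reduce everything to a data-processing inequality for the Bernoulli decision produced by the test, and then use monotonicity of the binary KL divergence. Given $\phi$ with the stated error guarantees, I introduce an auxiliary uniform variable $U\sim\mathrm{Unif}[0,1]$, independent of $X$, and define $D=\mathbf 1\{U\le\phi(X)\}$. Under $X\sim Q_i$, $D$ is Bernoulli with parameter $q_i:=Q_i\phi$. Since $X\mapsto D$ is a Markov kernel (the same kernel under both hypotheses), the data-processing inequality gives
\begin{align*}
\KL(Q_0\|Q_1)\ge \bkl(q_0\|q_1),\qquad \KL(Q_1\|Q_0)\ge\bkl(q_1\|q_0).
\end{align*}
Equivalently, this can be written as joint convexity, or as the log-sum inequality applied to the two-cell partition induced by the randomized decision. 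Next, Fubini gives $q_0=\int P\phi\,\Pi_0(\dd P)\le\alpha$, because $\Pi_0$ is supported on $\calP_0$. Likewise $1-q_1=\int P(1-\phi)\,\Pi_1(\dd P)\le\beta$, so $q_1\ge 1-\beta$. The measurability of $P\mapsto P\phi$ needed here is implicit in defining the mixtures.

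The remaining step, which is the only one requiring care, is a monotonicity statement. I need
\begin{align*}
\bkl(q_0\|q_1)\ge\bkl(\alpha\|1-\beta)=\bkl(1-\alpha\|\beta)
\qquad\text{whenever } q_0\le\alpha<1-\beta\le q_1.
\end{align*}
The equality uses the symmetry $\bkl(u\|v)=\bkl(1-u\|1-v)$. For the inequality I use the standard fact that $\bkl(u\|v)$ is nonincreasing in $u$ for $u\le v$ and nondecreasing in $v$ for $v\ge u$. Indeed, $\partial_u\bkl=\log\frac{u(1-v)}{v(1-u)}\le0$ when $u\le v$, and $\partial_v\bkl=\frac{v-u}{v(1-v)}\ge0$ when $v\ge u$. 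The assumption $\alpha+\beta<1$ gives $\alpha<1-\beta$, so all the intermediate points stay in the ordered regime. I then move $u$ from $q_0$ up to $\alpha$, which keeps $u\le q_1$ and decreases $\bkl$, and move $v$ from $q_1$ down to $1-\beta$, which keeps $v\ge\alpha$ and decreases $\bkl$. The second inequality follows the same way:
\begin{align*}
\bkl(q_1\|q_0)\ge\bkl(1-\beta\|\alpha).
\end{align*}
Boundary cases $q_0\in\{0\}$ or $q_1\in\{1\}$ are handled by the extended conventions, or by a limiting argument using lower semicontinuity. If the right-hand side is infinite, then the data-processing bound already makes the KL infinite, so nothing more is needed.

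Summing the two inequalities gives \Cref{eq:twosided-main}. For the closed form in \Cref{eq:Jdef}, I expand the two binary KL terms and collect the logarithms:
\begin{align*}
(1-\alpha-\beta)\log\frac{(1-\alpha)(1-\beta)}{\alpha\beta}.
\end{align*}
This is routine algebra. The final "equivalently" sentence is just the contrapositive. If some prior pair violates an inequality in \Cref{eq:onesided-main}, no test achieving both targets exists. Hence every test of uniform level at most $\alpha$ has $\sup_{P\in\calP_1}P(1-\phi)>\beta$, so some element of $\calP_1$ has type II error exceeding $\beta$, since the supremum exceeds $\beta$. A violation of \Cref{eq:twosided-main} forces a violation of at least one of the two directed inequalities.

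I expect the main obstacle to be the careful handling of the monotonicity argument at boundary values. The data-processing step itself is standard.
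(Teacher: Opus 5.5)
Your proposal is correct and matches the paper's proof. Both use data processing through the Bernoulli decision kernel induced by $\phi$ (your auxiliary uniform $U$ is just a concrete realization of that kernel), the bounds $q_0\le\alpha$ and $q_1\ge 1-\beta$ obtained by averaging over the priors, and monotonicity of $\bkl$ in the ordered regime. The only cosmetic difference is that you work with $\bkl(q_0\|q_1)$ on the region $u\le v$, whereas the paper passes to the reflected form $\bkl(1-q_0\|1-q_1)$ and uses monotonicity on $u>v$; these are the same statement by the symmetry $\bkl(u\|v)=\bkl(1-u\|1-v)$.
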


\begin{proof}[Proof of \Cref{thm:mixture-lower}]
For $i=0,1$, let $q_i=\int\phi\,\dd Q_i$. Since $\Pi_0$ and $\Pi_1$ are supported on $\calP_0$ and $\calP_1$, respectively, we have $q_0=\int(P\phi)\,\Pi_0(\dd P)\le\alpha$ and $q_1=1-\int P(1-\phi)\,\Pi_1(\dd P)\ge1-\beta$. In particular, $q_1\ge1-\beta>\alpha\ge q_0$. Let $K$ be the Markov kernel from $\calX$ to $\{0,1\}$ defined by $K(x,\{1\})=\phi(x)$ and $K(x,\{0\})=1-\phi(x)$. By the definition of $q_i$, this gives $Q_iK=\operatorname{Bern}(q_i)$. Hence, by the data-processing inequality,
\begin{align*}
  \KL(Q_0\|Q_1)\ge \bkl(q_0\|q_1)=\bkl(1-q_0\|1-q_1),
  \qquad
  \KL(Q_1\|Q_0)\ge \bkl(q_1\|q_0).
\end{align*}
For $0<v<u<1$,
\begin{align*}
  \frac{\partial}{\partial u}\bkl(u\|v)
  =
  \log\frac{u(1-v)}{v(1-u)}>0,
  \qquad
  \frac{\partial}{\partial v}\bkl(u\|v)
  =
  \frac{v-u}{v(1-v)}<0.
\end{align*}
Thus $\bkl(u\|v)$ is increasing in $u$ and decreasing in $v$ on the region $u>v$; the same monotonicity extends to the boundary with the endpoint conventions for $\bkl$. Since $1-q_0\ge1-\alpha>\beta\ge1-q_1$, we obtain
\begin{align*}
  \KL(Q_0\|Q_1)
  \ge
  \bkl(1-q_0\|1-q_1)
  \ge
  \bkl(1-\alpha\|\beta).
\end{align*}
Similarly, since $q_1\ge1-\beta>\alpha\ge q_0$, we have
\begin{align*}
  \KL(Q_1\|Q_0)
  \ge
  \bkl(q_1\|q_0)
  \ge
  \bkl(1-\beta\|\alpha).
\end{align*}
This proves \Cref{eq:onesided-main}, and summing the two inequalities gives \Cref{eq:twosided-main}. Finally, if a prior pair violates either inequality in \Cref{eq:onesided-main}, then the existence of a test satisfying both prescribed error bounds would contradict the inequalities just proved. Hence every test with level at most $\alpha$ satisfies $\sup_{P\in\calP_1}P(1-\phi)>\beta$. If \Cref{eq:twosided-main} is violated, then at least one of the two one-sided inequalities must be violated, so the same conclusion follows.
\end{proof}

We next convert \Cref{thm:mixture-lower} into a lower bound for critical radii. Let $\{\calP_1(\rho):\rho>0\}$ be a nested family of alternative classes, so that $\calP_1(\rho_2)\subseteq\calP_1(\rho_1)$ whenever $\rho_2\ge\rho_1$. Define the minimax type II risk at level $\alpha$ and separation $\rho$ by
\begin{align*}
  \calR_\alpha(\rho)
  \coloneqq
  \inf_{\phi:\,\sup_{P\in\calP_0}P\phi\le\alpha}
  \sup_{P\in\calP_1(\rho)}P(1-\phi)
\end{align*}
and the prescribed-error critical radius by
\begin{align*}
  \rho_*(\alpha,\beta)
  =
  \inf\{\rho>0:\calR_\alpha(\rho)\le\beta\}.
\end{align*}
Suppose that priors supported on $\calP_0$ and $\calP_1(\rho_0)$ induce mixtures $Q_{0,\rho_0}$ and $Q_{1,\rho_0}$. If a level-$\alpha$ test achieves type II error at most $\beta$ at separation $\rho_0$, then \Cref{thm:mixture-lower} requires the directional KL bounds \Cref{eq:onesided-main}, and hence also the Jeffreys-divergence bound \Cref{eq:twosided-main}. Therefore, any violation of these thresholds yields a lower bound on the critical radius.

\begin{corollary}[Critical-radius consequence]
\label{cor:critical-radius-consequence}
In the setup above, if, for some $\rho_0>0$, a prior pair supported on $\calP_0$ and $\calP_1(\rho_0)$ induces mixtures satisfying
\begin{align*}
  \KL(Q_{0,\rho_0}\|Q_{1,\rho_0})
  + 
  \KL(Q_{1,\rho_0}\|Q_{0,\rho_0})
  < J_{\alpha,\beta},
\end{align*}
then $\rho_*(\alpha,\beta)\ge\rho_0$. The same conclusion holds if either one-sided bound
\begin{align*}
  \KL(Q_{0,\rho_0}\|Q_{1,\rho_0})
  < \bkl(1-\alpha\|\beta)
  \quad\text{or}\quad
  \KL(Q_{1,\rho_0}\|Q_{0,\rho_0})
  < \bkl(1-\beta\|\alpha)
\end{align*}
holds.
\end{corollary}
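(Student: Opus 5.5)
The plan is to argue by contradiction, combining \Cref{thm:mixture-lower} with the nestedness of the alternative classes and a small continuity argument. Suppose the prior pair at $\rho_0$ violates one of the three displayed inequalities, but $\rho_*(\alpha,\beta)<\rho_0$. If the set $\{\rho>0:\calR_\alpha(\rho)\le\beta\}$ is empty, then $\rho_*=+\infty$ and there is nothing to prove. Otherwise, the definition of the infimum gives some $\rho<\rho_0$ with $\calR_\alpha(\rho)\le\beta$. Since $\calP_1(\rho_0)\subseteq\calP_1(\rho)$, the supremum defining the type II risk over $\calP_1(\rho_0)$ is at most the one over $\calP_1(\rho)$ for every test. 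Hence $\calR_\alpha(\rho_0)\le\calR_\alpha(\rho)\le\beta$.

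The main subtlety is that $\calR_\alpha(\rho_0)\le\beta$ is a statement about an infimum over level-$\alpha$ tests. It need not yield a test whose uniform type II error is at most $\beta$ exactly, which is what \Cref{thm:mixture-lower} requires. I would handle this with a slack parameter rather than a compactness argument. For every $\beta'\in(\beta,1-\alpha)$ there exists a test $\phi$ with $\sup_{P\in\calP_0}P\phi\le\alpha$ and $\sup_{P\in\calP_1(\rho_0)}P(1-\phi)\le\beta'$. Applying \Cref{thm:mixture-lower} with the pair $(\alpha,\beta')$, which still satisfies $\alpha+\beta'<1$, and with the given priors gives
\begin{align*}
  \KL(Q_{0,\rho_0}\|Q_{1,\rho_0})\ge\bkl(1-\alpha\|\beta'),
  \qquad
  \KL(Q_{1,\rho_0}\|Q_{0,\rho_0})\ge\bkl(1-\beta'\|\alpha),
\end{align*}
and hence their sum is at least $J_{\alpha,\beta'}$.

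Now let $\beta'\downarrow\beta$. On the region where $\alpha+\beta'<1$, the maps $\beta'\mapsto\bkl(1-\alpha\|\beta')$, $\beta'\mapsto\bkl(1-\beta'\|\alpha)$ and $\beta'\mapsto J_{\alpha,\beta'}$ are finite and continuous. This is immediate from the explicit formulas, since all arguments stay in $(0,1)$. Passing to the limit therefore yields the non-strict inequalities $\KL(Q_{0,\rho_0}\|Q_{1,\rho_0})\ge\bkl(1-\alpha\|\beta)$, $\KL(Q_{1,\rho_0}\|Q_{0,\rho_0})\ge\bkl(1-\beta\|\alpha)$, and the Jeffreys bound with $J_{\alpha,\beta}$. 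This contradicts whichever strict inequality was assumed in the hypothesis, so $\rho_*(\alpha,\beta)\ge\rho_0$.

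I expect the only real obstacle to be this non-attainment issue. The continuity step resolves it cleanly, because the hypotheses of the corollary are strict inequalities while the limits give non-strict ones. The remaining steps, namely monotonicity of $\calR_\alpha$ under nesting and the empty-set convention for $\rho_*$, are routine.
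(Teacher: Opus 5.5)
Your proof is correct and follows essentially the paper's argument. Both proofs deal with the possible non-attainment of the infimum in $\calR_\alpha(\rho_0)$ by applying \Cref{thm:mixture-lower} at a slightly larger $\beta'\in(\beta,1-\alpha)$, using continuity of $J_{\alpha,\beta}$ and $\bkl$ in $\beta$ and the nesting of the alternative classes; the only difference is that the paper perturbs the strict hypothesis to some $\beta'>\beta$ and concludes directly $\calR_\alpha(\rho_0)\ge\beta'>\beta$, whereas you argue by contradiction and let $\beta'\downarrow\beta$ in the conclusion.
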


When the total error is bounded away from one, the binary Jeffreys divergence $J_{\alpha,\beta}$ exhibits a logarithmic order in the two error targets. 

\begin{lemma}[Size of the binary Jeffreys divergence]
\label{lem:J-asymp}
For every $c_*\in(0,1)$ there is a constant $c>0$, depending only on $c_*$, such that for all $\alpha,\beta\in(0,1)$ with $\alpha+\beta\le c_*$,
\begin{align*}
  c\left(\log\frac1\alpha+\log\frac1\beta\right)
  \le
  J_{\alpha,\beta}
  \le
  \log\frac1\alpha+\log\frac1\beta.
\end{align*}
\end{lemma}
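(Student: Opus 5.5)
We work directly with the closed form in \Cref{eq:Jdef},
\begin{align*}
  J_{\alpha,\beta}=(1-\alpha-\beta)\left(\log\frac1\alpha+\log\frac1\beta+\log\bigl((1-\alpha)(1-\beta)\bigr)\right),
\end{align*}
and bound the prefactor and the logarithmic term separately.

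\emph{Upper bound.} Under $\alpha+\beta<1$, the prefactor satisfies $0<1-\alpha-\beta\le1$. Also $\log((1-\alpha)(1-\beta))\le0$, and the bracket equals $\log\frac{(1-\alpha)(1-\beta)}{\alpha\beta}$, which is positive. Positivity holds because $(1-\alpha)(1-\beta)>\alpha\beta$ is equivalent to $1-\alpha-\beta>0$. Hence $J_{\alpha,\beta}\le\log\frac1\alpha+\log\frac1\beta$. This part uses no assumption beyond $\alpha+\beta<1$.

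\emph{Lower bound.} Assume $\alpha+\beta\le c_*$. Then the prefactor is at least $1-c_*$. It remains to show that the bracket is at least a constant multiple of $L\coloneqq\log\frac1\alpha+\log\frac1\beta$. The only issue is the negative term $\log((1-\alpha)(1-\beta))$, and this is the one step needing care, since $\alpha$ or $\beta$ may approach $c_*$. Both $\alpha,\beta\le c_*$, so $\log((1-\alpha)(1-\beta))\ge2\log(1-c_*)$. On the other hand, since $\alpha+\beta\le c_*$, we have $\alpha\beta\le c_*^2/4$ by AM--GM, so $L\ge\log(4/c_*^2)\ge2\log(2/c_*)>0$.

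We then aim to show $L+\log((1-\alpha)(1-\beta))\ge\kappa L$ for some $\kappa=\kappa(c_*)>0$. A naive comparison of $-2\log(1-c_*)$ with $L$ can fail, for instance when $c_*$ is close to $1$. The cleaner route is to use directly
\begin{align*}
  \frac{(1-\alpha)(1-\beta)}{\alpha\beta}=1+\frac{1-\alpha-\beta}{\alpha\beta}\ge1+\frac{1-c_*}{\alpha\beta}.
\end{align*}
Set $x=1/(\alpha\beta)\ge4/c_*^2\ge4$. The function $\log(1+(1-c_*)x)/\log x$ is continuous and positive on $[4,\infty)$ and tends to $1$ as $x\to\infty$. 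It is therefore bounded below by some $\kappa(c_*)>0$, and concretely $(1-c_*)x+1\ge x^{\kappa}$ can be checked with an explicit $\kappa$.

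Since $\log x=L$, combining the two estimates gives $J_{\alpha,\beta}\ge(1-c_*)\kappa L$. The lemma follows with $c=(1-c_*)\kappa(c_*)$.
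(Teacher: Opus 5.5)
Your proof is correct and follows the paper's route. The upper bound uses the same facts $1-\alpha-\beta\le1$ and $(1-\alpha)(1-\beta)\le1$, and the lower bound uses the same identity $\frac{(1-\alpha)(1-\beta)}{\alpha\beta}=1+\frac{1-\alpha-\beta}{\alpha\beta}\ge1+\frac{1-c_*}{\alpha\beta}$; the only difference is the last step, where the paper gets the explicit constant $\kappa=\gamma/(1+\gamma)$, with $\gamma=1-c_*$, from the derivative bound $\frac{\dd}{\dd t}\log(1+\gamma e^t)\ge\gamma/(1+\gamma)$ for $t\ge0$, whereas you obtain a non-explicit $\kappa(c_*)$ from continuity and the limit at infinity.
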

Thus, in the regime of small errors, $J_{\alpha,\beta}$ is equivalent up to constants to the sum of the two error logarithms. In particular, it is not determined by the total error $\alpha+\beta$ alone, which allows the resulting lower bounds to retain the separate roles of the two prescribed error targets. 

We next apply the lower-bound principle to several standard testing problems and show that it yields prescribed-error minimax rates sharp up to constant factors.

\section{Gaussian sequence models}
\label{sec:gaussian}

\subsection{Model and notation}

We start with the Gaussian sequence model, a canonical setting for minimax testing:
\begin{equation}
\label{eq:gaussian-model}
Y_j=\theta_j+n^{-1/2}Z_j,
\qquad
Z_j\stackrel{\mathrm{iid}}\sim N(0,1).
\end{equation}
In dimension $d$, write $P_\theta$ for the law of $(Y_1,\dots,Y_d)$ and $P_0= P_{\theta=0}$. The null is simple, $\calP_0=\{P_0\}$. The lower bounds also apply to any composite null class containing $P_{\theta=0}$, and the matching upper bounds are stated for the simple null.

We write $a\asymp_\kappa b$ to denote that $c_\kappa b\le a\le C_\kappa b$ for positive constants depending only on $\kappa$.

\subsection{Finite-dimensional dense testing}

For finite-dimensional dense alternatives, separation is measured by Euclidean distance from the origin. Let $\calG_d(\rho)=\{\theta\in\R^d:\|\theta\|_2\ge\rho\}$ and define $R^{\rm G}_{d,n,\alpha}(\rho)=\inf_{\phi:\,P_0\phi\le\alpha}\sup_{\theta\in\calG_d(\rho)}P_\theta(1-\phi)$. Let
\begin{align*}
  \rho^{\rm G}_{d,n}(\alpha,\beta)
  =
  \inf\{\rho>0:R^{\rm G}_{d,n,\alpha}(\rho)\le\beta\}
\end{align*}
be the corresponding critical radius. Two mechanisms determine the radius. The parametric branch contributes $(J_{\alpha,\beta}/n)^{1/2}$, while the dense-mixture branch contributes $(dJ_{\alpha,\beta}/n^2)^{1/4}$. The minimax radius is governed by the larger of these two scales.

\begin{theorem}[Gaussian dense testing with prescribed errors]
\label{thm:gaussian-finite-main}
Fix $c_*\in(0,1)$. For $d\ge1$, $n>0$, and $\alpha,\beta\in(0,1)$ with $\alpha+\beta\le c_*$, define
\begin{align*}
  r^{\rm G}_{d,n}(\alpha,\beta)
  =
  \max\left\{
  \left(\frac{dJ_{\alpha,\beta}}{n^2}\right)^{1/4},
  \left(\frac{J_{\alpha,\beta}}{n}\right)^{1/2}
  \right\}.
\end{align*}
Then
\begin{align*}
  \rho^{\rm G}_{d,n}(\alpha,\beta)
  \asymp_{c_*}
  r^{\rm G}_{d,n}(\alpha,\beta).
\end{align*}
\end{theorem}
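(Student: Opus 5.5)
The proof splits into a lower bound, obtained from \Cref{cor:critical-radius-consequence} with two prior constructions (one per branch of $r^{\rm G}_{d,n}$), and an upper bound from an explicit test combining a chi-square-type statistic with a one-dimensional projection. Throughout, \Cref{lem:J-asymp} lets us replace $J_{\alpha,\beta}$ by $L\coloneqq\log(1/\alpha)+\log(1/\beta)$ up to constants depending only on $c_*$.

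\emph{Lower bound, parametric branch.} Take $\Pi_0=\delta_0$ and $\Pi_1=\delta_{\rho e_1}$. Then
\[
\KL(P_0\|P_{\rho e_1})=\KL(P_{\rho e_1}\|P_0)=n\rho^2/2,
\]
so the Jeffreys sum equals $n\rho^2$. Choosing $\rho_0=(J_{\alpha,\beta}/(2n))^{1/2}$ makes the sum strictly smaller than $J_{\alpha,\beta}$, and \Cref{cor:critical-radius-consequence} yields $\rho^{\rm G}_{d,n}\ge \rho_0$.

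\emph{Lower bound, dense branch.} Take $\Pi_1$ to be the Rademacher prior $\theta=(\rho/\sqrt d)\,\varepsilon$, which lies on the sphere $\|\theta\|_2=\rho$. The mixture $Q_1$ is a product of coordinatewise two-point mixtures, so both KLs tensorize and we only need to bound one coordinate. Set $a=\rho/\sqrt d$ and $s=\sqrt n\,a$, and work in the rescaled coordinate $X=\sqrt n Y\sim N(0,1)$ under $P_0$. The coordinate likelihood ratio is $e^{-s^2/2}\cosh(sX)$. Using $\log\cosh x\le x^2/2$ and $\log\cosh x\ge x^2/2-x^4/12$, I would show
\[
\KL(P_0\|Q_1)=d\,\mE\!\left[s^2/2-\log\cosh(sX)\right]\le C\,d\,s^4 .
\]
For the reverse direction, $\KL(Q_1\|Q_0)\le\log(1+\chi^2(Q_1\|P_0))$, and
\[
1+\chi^2(Q_1\|P_0)=\cosh(s^2)^d\le e^{ds^4/2},
\]
so $\KL(Q_1\|Q_0)\le ds^4/2$. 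Since $ds^4=n^2\rho^4/d$, the Jeffreys sum is at most $Cn^2\rho^4/d$. Taking $\rho_0=c\,(dJ_{\alpha,\beta}/n^2)^{1/4}$ makes this sum smaller than $J_{\alpha,\beta}$, and \Cref{cor:critical-radius-consequence} applies. The main technical point is that the bound must be $O(ds^4)$ uniformly, even when $s$ is not small. For large $s$ a cruder estimate $O(d s^2)$ holds, but that regime corresponds to $\rho^2\gtrsim d/n$, where the parametric branch already dominates. So I would restrict to $s\le1$, i.e.\ work only when $(dJ/n^2)^{1/4}\ge (J/n)^{1/2}$ forces $J\le d$, which gives $s^2=\rho_0^2n/d\le c\sqrt{J/d}\le 1$.

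\emph{Upper bound.} Let $T=n\|Y\|_2^2-d$, which under $P_0$ is a centred $\chi^2_d$ variable. By Laurent--Massart, $T\le 2\sqrt{d\log(1/\alpha)}+2\log(1/\alpha)$ with probability at least $1-\alpha$; reject when $T$ exceeds this threshold. Under $P_\theta$, $n\|Y\|_2^2$ is noncentral chi-square with noncentrality $\lambda=n\|\theta\|_2^2$. The corresponding lower-tail bound gives $T\ge\lambda-2\sqrt{(d+2\lambda)\log(1/\beta)}$ with probability at least $1-\beta$. Requiring $\lambda\ge C\bigl(\sqrt{dL}+L\bigr)$ then ensures type II error at most $\beta$. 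Converting back, this holds whenever $\rho\ge C'\bigl((dL/n^2)^{1/4}+(L/n)^{1/2}\bigr)$, which by \Cref{lem:J-asymp} is at most $C''r^{\rm G}_{d,n}$, with constants depending only on $c_*$. The only slightly delicate step is the cross term $\sqrt{\lambda L}$ in the type II bound, which is absorbed using $\sqrt{\lambda L}\le \lambda/4+L$.
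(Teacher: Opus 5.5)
Your proposal is correct and follows essentially the same route as the paper: a point-mass alternative at $\rho e_1$ and the Rademacher hypercube prior for the lower bound, with the same $\log\cosh$ and second-moment estimates giving a Jeffreys sum of order $n^2\rho^4/d$, and the plain chi-square test with Laurent--Massart tails for the upper bound. Two differences are only cosmetic: your restriction to $s\le1$ is unnecessary, because $\log\cosh x\ge x^2/2-x^4/12$ holds for every real $x$ and so $\KL(P_0\|Q_1)\le ds^4/4$ uniformly; and the one-dimensional projection you announce at the start is never actually used or needed.
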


When $\alpha$ and $\beta$ are fixed constants, $J_{\alpha,\beta}\asymp 1$, and the theorem recovers the classical dense-testing rate $d^{1/4}n^{-1/2}$ \citep[e.g.,][]{IngsterSuslina2003}. The proof combines parametric and dense-mixture lower bounds with a matching chi-square projection test; see \Cref{app:gaussian}.

\subsection{Sobolev ellipsoids}

For Sobolev ellipsoids, the smoothness constraint optimizes the finite-dimensional dense-mixture branch into a nonparametric branch. For the infinite version of \Cref{eq:gaussian-model}, let $\calE_\tau(M)=\{\theta\in\ell_2:\sum_{j\ge1}j^{2\tau}\theta_j^2\le M^2\}$, $\tau>0$, and $\calE_\tau(M,\rho)=\{\theta\in\calE_\tau(M):\|\theta\|_2\ge\rho\}$. The corresponding minimax type II risk is
\begin{align*}
  R^{(\tau,M)}_{n,\alpha}(\rho)
  =
  \inf_{\phi:\,P_0\phi\le\alpha}
  \sup_{\theta\in\calE_\tau(M,\rho)}P_\theta(1-\phi).
\end{align*}
We denote the corresponding prescribed-error critical radius by
\begin{align*}
  \rho^{(\tau,M)}_n(\alpha,\beta)
  =
  \inf\{\rho>0:R^{(\tau,M)}_{n,\alpha}(\rho)\le\beta\}.
\end{align*}
The term $n^{-2\tau/(4\tau+1)}J_{\alpha,\beta}^{\tau/(4\tau+1)}$ in the theorem below is the Sobolev analogue of the finite-dimensional dense-mixture branch $(dJ_{\alpha,\beta}/n^2)^{1/4}$ from the previous subsection.

\begin{theorem}[Sobolev testing with prescribed errors]
\label{thm:sobolev-main}
Fix $\tau>0$, $M>0$ and $c_*\in(0,1)$. For $\alpha,\beta\in(0,1)$ with $\alpha+\beta\le c_*$, define
\begin{align*}
  r^{(\tau,M)}_n(\alpha,\beta)
  =
  \max\left\{
  n^{-2\tau/(4\tau+1)}J_{\alpha,\beta}^{\tau/(4\tau+1)},
  \left(\frac{J_{\alpha,\beta}}{n}\right)^{1/2}
  \right\}.
\end{align*}
Then
\begin{align*}
  \rho^{(\tau,M)}_n(\alpha,\beta)
  \asymp_{\tau,M,c_*}
  \min\left\{1,r^{(\tau,M)}_n(\alpha,\beta)\right\}.
\end{align*}
\end{theorem}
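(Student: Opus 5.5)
We prove matching lower and upper bounds, reducing the lower bound to \Cref{cor:critical-radius-consequence} and the finite-dimensional construction behind \Cref{thm:gaussian-finite-main}. Write $J=J_{\alpha,\beta}$, which by \Cref{lem:J-asymp} is bounded below by a constant depending only on $c_*$.

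\textbf{Lower bound.} There are two branches.

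The \emph{parametric branch} uses the simple pair $Q_0=P_0$ and $Q_1=P_{\rho e_1}$. Here $\KL$ in either direction equals $n\rho^2/2$, so the Jeffreys sum is $n\rho^2$. Since $\rho e_1\in\calE_\tau(M)$ whenever $\rho\le M$, choosing $\rho^2=cJ/n$ with $c<1$ and $\rho\le M$ gives $\rho_*\ge\min\{M,(cJ/n)^{1/2}\}$.

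The \emph{nonparametric branch} puts a sign prior on the first $D$ coordinates: $\theta_j=\epsilon_j\rho D^{-1/2}$ for $j\le D$, with $\epsilon_j$ i.i.d.\ Rademacher, and $\theta_j=0$ otherwise. Then $\|\theta\|_2=\rho$, and $\sum_j j^{2\tau}\theta_j^2\le D^{2\tau}\rho^2\le M^2$ as soon as $\rho\le MD^{-\tau}$. The mixture $Q_1$ is a product of symmetric two-point Gaussian mixtures, so both KL divergences tensorize, and each coordinate contributes $O(n^2\rho^4D^{-2})$ when $n\rho^2/D\lesssim1$. The bound on $\KL(Q_0\|Q_1)$ comes from expanding $-\log\cosh$. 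The bound on $\KL(Q_1\|Q_0)$ comes from $\KL\le\chi^2$, with per-coordinate $\chi^2=\cosh(n\rho^2/D)-1$. This is presumably the same computation as in the dense branch of \Cref{thm:gaussian-finite-main}, which I would reuse.

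The Jeffreys sum is therefore $\lesssim n^2\rho^4/D$, and it falls below $J$ when $\rho^4\asymp DJ/n^2$. Balancing against the ellipsoid constraint $\rho\asymp D^{-\tau}$ gives $D\asymp (n^2/J)^{1/(4\tau+1)}$ and $\rho\asymp n^{-2\tau/(4\tau+1)}J^{\tau/(4\tau+1)}$.

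The constraints $D\ge1$ and $n\rho^2/D\lesssim1$ need care. The second amounts to $n^2\rho^4/D^2\lesssim1$, i.e.\ $J/D\lesssim1$. When $D<J$, which is the regime $J^{4\tau+2}\gtrsim n^2$, I expect the parametric branch to dominate, so the nonparametric branch is not needed there. If $D<1$, the radius saturates, and the same parametric construction with $\rho\asymp M$ handles it; this is the source of the $\min\{1,\cdot\}$. Verifying this case bookkeeping, including that KL stays controlled without the small-signal expansion, is the main obstacle. The fallback is to cap the effective signal and use only $\KL\le\log(1+\chi^2)$.

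\textbf{Upper bound.} Use a truncated chi-square projection test with statistic $T=\sum_{j\le D}(nY_j^2-1)$ and the same $D$ as above. Under $P_0$, $T$ is a centered $\chi^2_D$ variable. The Laurent--Massart bounds give the threshold $t=C(\sqrt{D\log(1/\alpha)}+\log(1/\alpha))$.

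Under the alternative, the tail beyond $D$ contributes at most $M^2D^{-2\tau}$ to $\|\theta\|_2^2$, so $\sum_{j\le D}\theta_j^2\ge\rho^2/2$ once $\rho^2\ge 2M^2D^{-2\tau}$. The lower-tail bound for the noncentral chi-square then yields type II error at most $\beta$ provided
\[
n\rho^2\gtrsim \sqrt{D\log(1/\beta)}+\log(1/\beta)+\sqrt{D\log(1/\alpha)}+\log(1/\alpha).
\]
This step probably reuses the finite-dimensional test from \Cref{thm:gaussian-finite-main}. By \Cref{lem:J-asymp}, the right-hand side is $\lesssim\sqrt{DJ}+J$. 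Optimizing over $D$ recovers $r_n^{(\tau,M)}$.

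When $r\gtrsim1$, a trivial bound applies, because $\|\theta\|_2\le M$ on the ellipsoid. So for $\rho$ above a constant multiple of $M$ the class $\calE_\tau(M,\rho)$ is empty and the risk is $0$, which explains the cap at $\min\{1,r\}$ with constants depending on $M$.
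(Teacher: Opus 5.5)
Your proposal follows essentially the same route as the paper. For the lower bound it uses a single-coordinate parametric alternative and a Rademacher hypercube on the first $D\asymp(M/\rho)^{1/\tau}$ coordinates, both fed into \Cref{cor:critical-radius-consequence}; for the upper bound it uses a truncated chi-square test with Laurent--Massart tails and the tail-energy bound $M^2D^{-2\tau}$. The small-signal bookkeeping you flag as the main obstacle is unnecessary, because \Cref{lem:gaussian-dense-mixture} bounds the symmetrized KL by $\tfrac34 n^2\rho^4/d$ at every signal size, via $\log\cosh x\ge x^2/2-x^4/12$ for all $x$ and $\KL(Q\|P_0)\le\log\mE_0L^2=d\log\cosh(t^2)$; your case analysis (with the balanced choice of $D$, $J\lesssim D$ holds exactly when the nonparametric branch dominates) is nonetheless also correct.
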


When $\alpha$ and $\beta$ are fixed constants, $J_{\alpha,\beta}\asymp1$, and the theorem reduces to the classical separation rate $n^{-2\tau/(4\tau+1)}$~\citep[e.g.,][]{IngsterSuslina2003}. The lower bound uses parametric and nonparametric branch priors, whereas the upper bound truncates the quadratic statistic at the effective dimension; see \Cref{app:gaussian}.

\section{Multinomial uniformity testing}
\label{sec:multinomial}

We next consider multinomial uniformity testing. Let $u_d=(1/d,\dots,1/d)$ and observe $X=(X_1,\dots,X_d)\sim\Mult(n,p)$, equivalently, independent observations $W_1,\dots,W_n$ from $p$ on $[d]$. Write $P_p$ for the law of the full $n$-sample, or equivalently of its multinomial count vector. The null is the singleton $\calP_0=\{P_{u_d}\}$. As before, the lower bounds apply to any larger null class containing $P_{u_d}$, while the upper bound is stated for the simple uniform null. For $\delta\in(0,1]$, define
\begin{align*}
  R^{\rm unif}_{d,n,\alpha}(\delta)
  =
  \inf_{\phi:\,P_{u_d}\phi\le\alpha}
  \sup_{p:\,\|p-u_d\|_1\ge\delta}P_p(1-\phi).
\end{align*}
The corresponding critical radius is 
\begin{align*}
  \delta^{\rm unif}_{d,n}(\alpha,\beta)=\inf\{\delta > 0:R^{\rm unif}_{d,n,\alpha}(\delta)\le\beta\}.
\end{align*}
As in the Gaussian model, two mechanisms determine the radius. A Paninski sign mixture yields the dense-mixture branch $(dJ_{\alpha,\beta}/n^2)^{1/4}$, while the parametric branch contributes $(J_{\alpha,\beta}/n)^{1/2}$. The minimax radius is governed by the larger of these two scales.

\begin{theorem}[Prescribed-error radius for multinomial uniformity testing]
\label{thm:unif-main}
Fix $c_*\in(0,1)$. For every $d\ge2$, $n\in\N$, and $\alpha,\beta\in(0,1)$ with $\alpha+\beta\le c_*$, set
\begin{align*}
  r^{\rm unif}_{d,n}(\alpha,\beta)
  =
  \max\left\{
  \left(\frac{dJ_{\alpha,\beta}}{n^2}\right)^{1/4},
  \left(\frac{J_{\alpha,\beta}}{n}\right)^{1/2}
  \right\}.
\end{align*}
Then
\begin{align*}
  \delta^{\rm unif}_{d,n}(\alpha,\beta)
  \asymp_{c_*}
  \min\left\{1,r^{\rm unif}_{d,n}(\alpha,\beta)\right\}.
\end{align*}
\end{theorem}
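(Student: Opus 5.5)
The argument has two halves: a lower bound obtained from \Cref{cor:critical-radius-consequence} with two separate prior pairs, and a matching upper bound from a truncated collision/chi-square statistic with exponential-type concentration. Throughout, write $J=J_{\alpha,\beta}$. By \Cref{lem:J-asymp}, $J\asymp_{c_*}\log(1/\alpha)+\log(1/\beta)$, and $J$ is bounded below by a constant depending only on $c_*$. Because $\|p-u_d\|_1\le 2$ always, the $\min\{1,\cdot\}$ caps the rate once $r^{\rm unif}_{d,n}$ exceeds a constant.

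\textbf{Parametric lower bound.} Take $d$ even and set $p_\delta=u_d+(\delta/d)(1,-1,1,-1,\dots)$, so $\|p_\delta-u_d\|_1=\delta$. For odd $d$, leave the last coordinate at $1/d$. Use point-mass priors at $u_d$ and $p_\delta$. Tensorization gives
\begin{align*}
\KL(P_{u_d}\|P_{p_\delta})+\KL(P_{p_\delta}\|P_{u_d})=n\sum_j (p_j-1/d)\log\frac{p_j}{1/d}\le C n\delta^2
\end{align*}
for $\delta\le 1/2$. If $\delta=c\min\{1,(J/n)^{1/2}\}$ with $c$ small, this sum is below $J$. The corollary then gives $\delta^{\rm unif}_{d,n}\ge c\min\{1,(J/n)^{1/2}\}$.

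\textbf{Dense-mixture lower bound.} Use Paninski's prior. Draw signs $\eta\in\{\pm1\}^{d/2}$ uniformly and set $p_\eta$ with pairs $(1+\epsilon\eta_i)/d,(1-\epsilon\eta_i)/d$, so $\|p_\eta-u_d\|_1=\epsilon$. Both directed KLs between the mixture $Q_1$ and $P_{u_d}$ need control. For $\KL(Q_1\|Q_0)\le\log(1+\chi^2(Q_1\|Q_0))$, the classical computation (Poissonization or the direct multinomial identity) gives
\begin{align*}
1+\chi^2\le \mE_{\eta,\eta'}\exp\Big(\frac{n\epsilon^2}{d}\langle\eta,\eta'\rangle\Big)\le \exp\Big(C\frac{n^2\epsilon^4}{d}\Big)
\end{align*}
when $n\epsilon^2/d\le1$, using Rademacher sub-Gaussianity. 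So $\KL(Q_1\|Q_0)\le Cn^2\epsilon^4/d$. The reverse direction $\KL(Q_0\|Q_1)$ is the main obstacle, since $\chi^2(Q_0\|Q_1)$ is not directly tractable. My first approach is to use the factorization of the likelihood ratio. Conditional on the pair totals $N_i=X_{2i-1}+X_{2i}$, which have the same law under null and alternative, the ratio factorizes:
\begin{align*}
\frac{dQ_1}{dQ_0}=\prod_i \tfrac12\big[(1+\epsilon)^{a_i}(1-\epsilon)^{b_i}+(1-\epsilon)^{a_i}(1+\epsilon)^{b_i}\big].
\end{align*}
Here $a_i,b_i$ are the pair counts. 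Then
\begin{align*}
\KL(Q_0\|Q_1)=-\sum_i\mE_0\log\cosh\text{-type factor}.
\end{align*}
Each factor can be written as $(1-\epsilon^2)^{N_i/2}\cosh((a_i-b_i)\arctanh\epsilon)$. Taking $-\mE_0\log$ and using $\log\cosh x\ge x^2/2-x^4/12$ with $a_i-b_i$ a centred binomial given $N_i$, the leading terms cancel and leave a per-pair bound of order $\epsilon^4\mE N_i^2$. Summing gives $\lesssim n^2\epsilon^4/d+n\epsilon^4$. If this cancellation fails, the fallback is to bound only $\KL(Q_1\|Q_0)$ and invoke the one-sided statement of \Cref{cor:critical-radius-consequence} against $\bkl(1-\beta\|\alpha)$. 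That only captures $\log(1/\alpha)$, though, so the symmetric bound really is needed. Once both directions are controlled, choose $\epsilon=c\min\{1,(dJ/n^2)^{1/4}\}$. The check $n\epsilon^2/d\le1$ holds whenever the dense branch dominates the parametric one, since in that regime $J\le d$; otherwise the parametric branch already gives the bound.

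\textbf{Upper bound.} Use a threshold test on a U-statistic $T=\sum_j[(X_j-n/d)^2-X_j]$, or a Poissonized version. Its level-$\alpha$ critical value is of order $(n/\sqrt d)\cdot\sqrt{\log(1/\alpha)}$ plus a Bernstein-type term of order $\log(1/\alpha)$ scale. Power at separation $\delta$ then follows from a Hanson--Wright/decoupling exponential inequality: $\mE T=n(n-1)\|p-u_d\|_2^2\ge n^2\delta^2/d$ must exceed the fluctuation scales, roughly
\begin{align*}
\sqrt{(n^2/d+n^2\|p-u\|_2^2\|p\|_\infty\cdot d)\,J}+J\cdot(\text{scale}).
\end{align*}
This step is delicate because heavy-count coordinates break the naive variance bound. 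Where the parametric branch binds, I would follow the Diakonikolas et al.\ high-confidence tester, which combines the collision statistic with an empirical-TV statistic, and combine it via a union over the two tests with errors $\alpha/2,\beta/2$. That matches $\max\{(dJ/n^2)^{1/4},(J/n)^{1/2}\}$.
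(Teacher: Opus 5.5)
\textbf{Reverse KL in the dense branch.} The dense-mixture estimate for $\KL(Q_0\|Q_1)$, as you state it, does not close. The rest of your lower-bound architecture matches the paper's: a point-mass parametric pair, the Paninski pair prior with $\KL(Q_1\|Q_0)\le\log(1+\chi^2)$, and conditioning on pair totals with the chain rule for $\KL(Q_0\|Q_1)$.

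With $\eta=\arctanh\epsilon$ and $T_i=a_i-b_i$, your Taylor step gives, per pair, $N_i\log\cosh\eta-\mE[\log\cosh(\eta T_i)\mid N_i]\le\eta^4(3N_i^2-2N_i)/12$. Summing $\epsilon^4\mE N_i^2$ leaves the term $n\epsilon^4$, which you drop when you say both directions are controlled. It is not negligible. At $\epsilon=c(dJ/n^2)^{1/4}$ one has $n\epsilon^4=c^4J\cdot d/n$, which is far larger than $J$ when $d\gg n$. That is exactly the classical regime $\sqrt{dJ}\lesssim n\ll d$, where $r^{\rm unif}_{d,n}<1$ and the dense branch is the binding one. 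In the capped case $\epsilon=c$, $dJ\ge n^2$, the term equals $c^4n$, which need not be $O(J)$. Keeping it would only give a dense-branch bound of order $\min\{(dJ/n^2)^{1/4},(J/n)^{1/4}\}$, off by $(d/n)^{1/4}$.

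\textbf{The fix.} Pairs with $N_i\le1$ carry no information. Given $N_i=1$, both laws put mass $1/2$ on each cell of the pair, so the conditional KL is exactly zero, whereas your inequality charges $\eta^4/12$. For $N_i\ge2$, use $N_i^2\le2N_i(N_i-1)$. The total then becomes $\lesssim\epsilon^4\sum_i\mE N_i(N_i-1)=\epsilon^4\cdot2n(n-1)/d$, as needed. This is precisely the content of \Cref{lem:binomial-block}, $\KL(\Bin(s,1/2)\|Q_{s,\delta})\le Cs(s-1)\delta^4$, which the paper proves via $F_0=F_1=0$ and increments $F_s-F_{s-1}\le C\delta^4(s-1)$. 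Separately, your restriction $n\epsilon^2/d\le1$ in the forward direction is unnecessary, since $\cosh(x)^K\le e^{Kx^2/2}$ for all $x$.

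\textbf{Upper bound.} Your own U-statistic analysis is unfinished; you flag the heavy-count step yourself. Combining two tests with errors $\alpha/2,\beta/2$ is also not needed. The Diakonikolas--Gouleakis--Peebles--Price high-confidence tester (\Cref{prop:known-unif-upper}) is stated for a common failure probability $\eta$. It works in all regimes once $n\gtrsim(\sqrt{d\log(1/\eta)}+\log(1/\eta))/\delta^2$. Apply it with $\eta=\alpha\wedge\beta<1/2$ and note that $\log(1/(\alpha\wedge\beta))\le\log(1/\alpha)+\log(1/\beta)\lesssim_{c_*}J_{\alpha,\beta}$ by \Cref{lem:J-asymp}; this is the paper's argument.

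\textbf{Odd $d$.} Leaving the last coordinate at $1/d$ gives $L_1$ distance $\delta(d-1)/d<\delta$. The amplitude must be rescaled, by $d/(d-1)$ for your parametric pair or by $d/(2\lfloor d/2\rfloor)$ for the Paninski prior, so that the priors are supported on the alternative.
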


When $\alpha$ and $\beta$ are fixed constants, $J_{\alpha,\beta}\asymp1$, and the theorem recovers the classical minimax uniformity-testing rate $d^{1/4}n^{-1/2}$~\citep[e.g.,][]{Paninski2008,DiakonikolasGouleakisPeeblesPrice2018}. The proof shows that the classical dense-mixture construction already controls both directional KL divergences required by the binary reduction; see \Cref{app:multinomial}. The matching upper bound uses the fixed-sample high-probability uniformity test recalled in \Cref{prop:known-unif-upper}.

\section{Continuous uniformity testing over H\"older balls}
\label{sec:holder}

Finally, we consider continuous uniformity testing over H\"older balls. Let $X_1,\dots,X_n$ be independent observations with density $f$ on $[0,1]^q$. Write $P_f= f^{\otimes n}$ for the joint law of the observations and $P_0= P_{f_0}$ for the uniform law. The null is the uniform density $f_0\equiv1$. For $0<s\le1$ and a bounded function $h$, write
\begin{align*}
  [h]_{C^s}
  =
  \sup_{x\ne y}
  \frac{|h(x)-h(y)|}{\|x-y\|_\infty^s},
  \qquad
  \|h\|_{C^s}=\|h\|_\infty+[h]_{C^s},
\end{align*}
which is the usual H\"older norm when $0<s<1$ and the Lipschitz norm when $s=1$. For $M>0$, $B>1$ and $\rho>0$, set
\begin{align*}
  \calH_1^s(M,B,\rho)
  =
  \left\{
   0\le f\le B,\ \int_{[0,1]^q}f=1,\
   \|f-1\|_{C^s}\le M,\
   \|f-1\|_1\ge\rho
  \right\}
\end{align*}
and define
\begin{align*}
  R^{(s,M,B)}_{n,\alpha}(\rho)
  =
  \inf_{\phi:\,P_0\phi\le\alpha}
  \sup_{f\in\calH_1^s(M,B,\rho)}P_f(1-\phi).
\end{align*}
The corresponding critical radius is
\begin{align*}
  \rho_n^{(s,M,B)}(\alpha,\beta)
  =
  \inf\{\rho>0:R^{(s,M,B)}_{n,\alpha}(\rho)\le\beta\}.
\end{align*}
The smoothness constraint restricts the class of alternatives available at separation $\rho$. Optimizing the dense-mixture branch under this constraint yields the nonparametric branch $n^{-2s/(4s+q)}J_{\alpha,\beta}^{s/(4s+q)}$, while the parametric branch contributes $(J_{\alpha,\beta}/n)^{1/2}$. The minimax radius is governed by the larger of these two scales, up to the trivial saturation of the $L_1$ distance.

\begin{theorem}[H\"older uniformity testing with prescribed errors]
\label{thm:holder-main}
Fix an integer $q\ge1$, $0<s\le1$, $M>0$, $B>1$ and $c_*\in(0,1)$. For every $n\in\N$ and $\alpha,\beta\in(0,1)$ with $\alpha+\beta\le c_*$, define
\begin{align*}
  r^{(s)}_n(\alpha,\beta)
  =
  \max\left\{
  n^{-2s/(4s+q)}J_{\alpha,\beta}^{s/(4s+q)},
  \left(\frac{J_{\alpha,\beta}}{n}\right)^{1/2}
  \right\}.
\end{align*}
Then
\begin{align*}
  \rho_n^{(s,M,B)}(\alpha,\beta)
  \asymp_{q,s,M,B,c_*}
  \min\left\{1,r^{(s)}_n(\alpha,\beta)\right\}.
\end{align*}
\end{theorem}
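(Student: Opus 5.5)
The proof splits into a lower bound, which uses \Cref{cor:critical-radius-consequence}, and an upper bound, which uses a truncated quadratic statistic with exponential concentration. Throughout, \Cref{lem:J-asymp} lets us treat $J:=J_{\alpha,\beta}$ as equivalent to $\log(1/\alpha)+\log(1/\beta)$. The bound $\rho\le\min\{1,\cdot\}$ is needed because every density with $\|f-1\|_{C^s}\le M$ has $\|f-1\|_1\le 2$. In the regime where $r^{(s)}_n$ exceeds a small constant, the lower bound is obtained by monotonicity: we apply the construction below at a constant radius.

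\emph{Lower bound, parametric branch.} Fix a smooth bump $g$ on $[0,1]^q$ with $\int g=0$ and $\|g\|_\infty$, $[g]_{C^s}$ bounded. Set $f_\epsilon=1+\epsilon g$ with $\epsilon\asymp\min\{c,(J/n)^{1/2}\}$, small enough that $f_\epsilon\in\calH^s_1(M,B,\rho_0)$ with $\rho_0\asymp\epsilon$. With Dirac priors,
\begin{align*}
\KL(P_0\|P_{f_\epsilon})+\KL(P_{f_\epsilon}\|P_0)=n\int(f_\epsilon-1)\log f_\epsilon\le Cn\epsilon^2.
\end{align*}
Choosing the constant in $\epsilon$ small makes this quantity $<J$, so the corollary gives $\rho_*\gtrsim(J/n)^{1/2}$.

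\emph{Lower bound, nonparametric branch.} This is the Ingster construction. Partition $[0,1]^q$ into $m=h^{-q}$ cubes and place scaled bumps $\psi_k(x)=h^{s}\psi((x-x_k)/h)$ with $\int\psi=0$. Set $f_\eta=1+\gamma\sum_k\eta_k\psi_k$ with Rademacher signs $\eta$ and a constant $\gamma\le M$ chosen so that $f_\eta$ is in the H\"older ball and bounded by $B$. Then $\|f_\eta-1\|_1\asymp h^s$.

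The difficulty is bounding both directional KL divergences between the mixture $Q_1=\mE_\eta P_{f_\eta}$ and $P_0$, rather than only the $\chi^2$ divergence.
\begin{itemize}
\item For $\KL(Q_1\|P_0)$, use $\KL\le\log(1+\chi^2)$. The standard computation gives $1+\chi^2(Q_1\|P_0)=\mE_{\eta,\eta'}\exp(n\gamma^2\|\psi\|_2^2h^{2s+q}\langle\eta,\eta'\rangle)\le\exp(Cm n^2h^{4s+2q})$, by the cosh bound $\cosh x\le e^{x^2/2}$. This requires $nh^{2s+q}\lesssim1$, which holds in the relevant regime; I would state this explicitly.
\item For $\KL(P_0\|Q_1)$, use convexity of $-\log$ (Jensen over $\eta$). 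This gives $\KL(P_0\|Q_1)\le\mE_\eta\KL(P_0\|P_{f_\eta})$, which is of order $n h^{2s}$ and too crude. So instead exploit the product structure over cells. Conditionally on the multinomial cell counts, the sign coordinates act independently, and $Q_1$ factorizes over cells given the counts, because the cell counts have the same law under $P_0$ and $Q_1$ (as $\int\psi_k=0$). Hence $\KL(P_0\|Q_1)=\sum_k\mE\,\KL(\text{cell-}k\text{ law under }P_0\,\|\,\text{sign mixture})$. For each cell, with $N_k$ points, bound the per-cell divergence by its second-order expansion, of order $\mE[N_k^2](h^{2s}\cdot\text{const})^2$. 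The expansion is valid because the likelihood ratio $\cosh$-type average is $1+O(x^2)$ with bounded terms (the bumps are bounded by $B$). The sum is then of order $m(nh^q)^2h^{4s}=n^2h^{4s+q}$ when $nh^q\gtrsim1$, matching the $\chi^2$ bound.
\end{itemize}
Setting $n^2h^{4s+q}=cJ$ gives $h\asymp(J/n^2)^{1/(4s+q)}$ and radius $h^s=n^{-2s/(4s+q)}J^{s/(4s+q)}$. I expect the reverse-KL cell computation to be the main obstacle. I would first try the generic inequality $\KL(P\|Q)\le\chi^2(P\|Q)$ applied per cell, controlling the per-cell $\chi^2(P_0\|\text{mixture})$ through the mixture likelihood ratio being bounded below by a constant. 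The sparse regime $nh^q\ll1$ then needs a separate Poissonization estimate.

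\emph{Upper bound.} Use a U-statistic based on a histogram or Haar projection at resolution $h$ with $h^{-q}=m$. Calibrate the threshold using a Bernstein/decoupling exponential inequality for degenerate U-statistics, so that type I error is $\le\alpha$ with a $\log(1/\alpha)$ inflation, and type II error is $\le\beta$ with a $\log(1/\beta)$ inflation. The $L_1$ separation transfers to projected $L_2$ via Cauchy–Schwarz and the H\"older bias $h^s$. The linear term of the U-statistic under the alternative produces the $(J/n)^{1/2}$ branch, and the degenerate term produces the $(mJ)^{1/2}/n$ branch. Balancing the bias $h^s$ against $(J^{1/2}h^{-q/2}/n)^{1/2}$ recovers $r^{(s)}_n$. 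Alternatively, one could discretize and invoke \Cref{thm:unif-main}'s upper bound (the high-probability uniformity test) on the $m$-cell multinomial, with $\delta\asymp\rho-Ch^s$; this is the route I would take to reuse \Cref{prop:known-unif-upper}.
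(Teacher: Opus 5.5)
Your outline follows the paper's structure: a two-point parametric branch, an Ingster bump mixture, conditioning on the cell labels (whose law is the same under the null and the mixture) together with the KL chain rule, and $\log(1+\chi^2)$ with the $\cosh$ bound for $\KL(Q_1\|P_0)$. That last step needs no restriction $nh^{2s+q}\lesssim1$. For the upper bound, the histogram discretization plus \Cref{prop:known-unif-upper} with $\eta=\alpha\wedge\beta$ is exactly the paper's route.

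The gap is in the step you flag as the main obstacle, $\KL(P_0\|Q_1)$. Your per-cell bound of order $\mE[N_k^2]h^{4s}$ sums to order $nh^{4s}+n^2h^{4s+q}$, and the first term dominates when $nh^q\ll1$. At the critical resolution $h^{4s+q}\asymp J/n^2$ one has $nh^q\asymp n^{(4s-q)/(4s+q)}J^{q/(4s+q)}$. So the sparse regime is precisely the relevant one whenever $q>4s$. There your bound only certifies radii of order $(J/n)^{1/4}$, which is $o(r_{\rm np})$ as $n\to\infty$ with $J$ fixed. Poissonization does not repair this, since $\mE N_k^2=\lambda+\lambda^2$ still carries the linear term.

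The missing idea is that the per-cell divergence is $O\{N_k(N_k-1)\kappa^4\}$, where $\kappa\asymp h^s$ is the within-cell relative amplitude. A cell holding at most one point contributes nothing, because a single draw from the sign mixture is exactly uniform on the cell. Then $\mE_0\sum_kN_k(N_k-1)=n(n-1)h^q$ gives the total $n^2h^{4s+q}$ with no regime split.

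The paper proves this in \Cref{lem:smooth-block}. Take symmetric, unit-variance bump values $H_i$ and the cell likelihood ratio $L_t=\frac12\prod_{i\le t}(1+\kappa H_i)+\frac12\prod_{i\le t}(1-\kappa H_i)$. One has $L_t=L_{t-1}(1+\kappa D_{t-1}H_t)$, where $D_{t-1}=\tanh\{\sum_{i<t}\arctanh(\kappa H_i)\}$. With $F_t=\mE[-\log L_t]$, symmetry of $H_t$ gives $F_t-F_{t-1}\le\kappa^2\mE D_{t-1}^2\le4(t-1)\kappa^4$, hence $F_t\le2t(t-1)\kappa^4$. This is why the paper uses an antisymmetric bump; your $\psi$ should be chosen with $\psi(U)$ symmetric.

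Your $\chi^2(P_0\|\cdot)$ fallback can also be made to work, but not through a constant lower bound on the likelihood ratio, which fails uniformly in $N_k$. Instead:
\begin{itemize}
\item use $L_t\ge\prod_i(1-\kappa^2H_i^2)^{1/2}\ge e^{-Ct\kappa^2}$ to get $\KL(P_t\|Q_t)\le e^{Ct\kappa^2}\chi^2(Q_t\|P_t)$;
\item note $\chi^2(Q_t\|P_t)=\sum_{k\ge1}\binom{t}{2k}\kappa^{4k}$, which again vanishes for $t\le1$;
\item control $\mE[N_k(N_k-1)e^{CN_k\kappa^2}]$ using $nh^{q+2s}\asymp(J^{q+2s}/n^q)^{1/(4s+q)}\lesssim1$, which holds exactly when the nonparametric branch dominates.
\end{itemize}
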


When $\alpha$ and $\beta$ are fixed constants, $J_{\alpha,\beta}\asymp1$, and the theorem recovers the classical minimax separation rate $n^{-2s/(4s+q)}$~\citep[e.g.,][]{Ingster1987,BalakrishnanWasserman2019}. The lower bound combines the nonparametric branch with the parametric branch, while the upper bound discretizes the observations by a regular histogram and applies a high-confidence discrete uniformity test; see \Cref{app:holder}.

\section{Discussion}
\label{sec:discussion}

We have established a binary reduction for prescribed-error minimax testing that retains the separate roles of the type I and type II error targets. The reduction yields two directed information requirements, one for each error constraint. When both directed mixture divergences can be controlled, they combine into a binary Jeffreys divergence $J_{\alpha,\beta}$, which captures the logarithmic dependence on the prescribed error parameters. This provides a simple route from many classical constant-error lower-bound constructions to corresponding prescribed-error and high-confidence results.

The examples considered here illustrate a broader phenomenon. Classical minimax lower bounds often control only one divergence direction, whereas prescribed-error lower bounds may additionally require control of the reverse direction. Whether this can be achieved depends on the structure of the testing problem and the choice of least favourable priors. In the settings studied here, the two directed mixture divergences are of the same order, leading to rates governed by a common information scale. More generally, asymmetric testing problems may exhibit different scaling in the two directions, resulting in genuinely different dependence on the type I and type II error requirements. Understanding such asymmetries remains an interesting direction for future work.

\paragraph{Acknowledgements.}
The author thanks Sivaraman Balakrishnan for pointing out the work of \citet{PensiaJogLoh2024} and for helpful discussions on minimax testing.

\bibliographystyle{plainnat}
\bibliography{references}

\appendix
\crefalias{section}{appendix}

\section[Proofs for Section 2]{Proofs for \Cref{sec:lower-bound}}
\label{app:lower-bound}

\subsection[Proof of Corollary 2.2]{Proof of \Cref{cor:critical-radius-consequence}}
\begin{proof}
Suppose first that
\begin{align*}
  \KL(Q_{0,\rho_0}\|Q_{1,\rho_0})
  +
  \KL(Q_{1,\rho_0}\|Q_{0,\rho_0})
  <
  J_{\alpha,\beta}.
\end{align*}
By continuity of $J_{\alpha,\beta}$ in $\beta$, there exists $\beta'\in(\beta,1-\alpha)$ such that
\begin{align*}
  \KL(Q_{0,\rho_0}\|Q_{1,\rho_0})
  +
  \KL(Q_{1,\rho_0}\|Q_{0,\rho_0})
  <
  J_{\alpha,\beta'}.
\end{align*}
Applying \Cref{thm:mixture-lower} with $(\alpha,\beta')$ yields $\calR_\alpha(\rho_0)\ge\beta'$. Hence $\calR_\alpha(\rho_0)>\beta$, and by nesting $\calR_\alpha(\rho)\ge\calR_\alpha(\rho_0)>\beta$ for all $\rho\le\rho_0$. Therefore $\rho_*(\alpha,\beta)\ge\rho_0$. The same argument applies when either one-sided KL inequality is violated.
\end{proof}

\subsection[Proof of binary Jeffreys divergence bound]{Proof of \Cref{lem:J-asymp}}

\begin{proof}
Let
\begin{align*}
  L=\log\frac1\alpha+\log\frac1\beta
  =\log\frac1{\alpha\beta}.
\end{align*}
Since $\alpha+\beta\le c_*<1$, the logarithm in \Cref{eq:Jdef} is positive. Moreover, $1-\alpha-\beta\le1$ and $(1-\alpha)(1-\beta)\le1$, and hence
\begin{align*}
  J_{\alpha,\beta}
  \le
  \log\frac1{\alpha\beta}
  =L.
\end{align*}
For the lower bound, set $\gamma=1-c_*>0$. Since $1-\alpha-\beta\ge \gamma$ and $(1-\alpha)(1-\beta)=1-\alpha-\beta+\alpha\beta\ge \gamma+\alpha\beta$, we have
\begin{align*}
  J_{\alpha,\beta}
  \ge
  \gamma\log\left(1+\frac{\gamma}{\alpha\beta}\right)
  =
  \gamma\log(1+\gamma e^L).
\end{align*}
Let $g(t)=\log(1+\gamma e^t)$. For every $t\ge0$,
\begin{align*}
  g'(t)=\frac{\gamma e^t}{1+\gamma e^t}\ge\frac{\gamma}{1+\gamma}.
\end{align*}
Consequently,
\begin{align*}
  g(L)\ge g(0)+\frac{\gamma}{1+\gamma}L
  \ge\frac{\gamma}{1+\gamma}L,
\end{align*}
and therefore
\begin{align*}
  J_{\alpha,\beta}
  \ge
  \frac{\gamma^2}{1+\gamma}L
  =
  \frac{(1-c_*)^2}{2-c_*}
  \left(
  \log\frac1\alpha+\log\frac1\beta
  \right).
\end{align*}
This proves the claim.
\end{proof}

\section{Proofs for the Gaussian examples}
\label{app:gaussian}

\subsection{Auxiliary Gaussian inequalities}

\begin{lemma}[Noncentral chi-square lower tail]
\label{lem:ncx2-tail}
Let $T$ have a noncentral chi-square distribution with $d$ degrees of freedom and noncentrality parameter $\lambda\ge0$. Then, for every $x>0$,
\begin{align*}
  \mP\left\{T\le d+\lambda-2\sqrt{(d+2\lambda)x}\right\}\le e^{-x}.
\end{align*}
If $T\sim\chi^2_d$, then, for every $x>0$,
\begin{align*}
  \mP\left\{T\ge d+2\sqrt{dx}+2x\right\}\le e^{-x}.
\end{align*}
\end{lemma}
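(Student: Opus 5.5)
These are the Laurent--Massart inequalities, and I would prove both by the Chernoff method applied to the moment generating function, followed by elementary bounds on the logarithm.

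\emph{Central upper tail.} For $T\sim\chi^2_d$ and $0<t<1/2$ we have $\log\mE e^{t(T-d)}=-\tfrac d2\log(1-2t)-td$. Using $-\log(1-u)-u\le \tfrac{u^2}{2(1-u)}$ for $u\in(0,1)$, with $u=2t$, gives
\begin{align*}
  \log\mE e^{t(T-d)}\le \frac{d t^2}{1-2t}.
\end{align*}
This is a sub-gamma bound with variance factor $v=2d$ and scale $c=2$. The standard sub-gamma Chernoff computation then yields $\mP\{T-d\ge \sqrt{2vx}+cx\}\le e^{-x}$, that is, $\mP\{T\ge d+2\sqrt{dx}+2x\}\le e^{-x}$.

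For that computation I would not optimise $t$ exactly. Instead I would plug in $t=\tfrac{\sqrt{x}}{\sqrt d+2\sqrt x}$, for which $1-2t=\tfrac{\sqrt d}{\sqrt d+2\sqrt x}$. The exponent $-t(2\sqrt{dx}+2x)+\tfrac{dt^2}{1-2t}$ then simplifies to
\begin{align*}
  -\frac{2x(\sqrt d+\sqrt x)-x\sqrt d}{\sqrt d+2\sqrt x}=-x.
\end{align*}

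\emph{Noncentral lower tail.} Write $T=\sum_{j}(Z_j+\mu_j)^2$ with $\sum_j\mu_j^2=\lambda$. For $s>0$,
\begin{align*}
  \log\mE e^{-s(T-d-\lambda)}=-\tfrac d2\log(1+2s)+sd-\frac{s\lambda}{1+2s}+s\lambda .
\end{align*}
I would bound the two pieces separately. Using $\log(1+u)\ge u-u^2/2$, the first pair satisfies $-\tfrac d2\log(1+2s)+sd\le ds^2$. The second pair equals $\tfrac{2s^2\lambda}{1+2s}\le 2s^2\lambda$. Together these give
\begin{align*}
  \log\mE e^{-s(T-d-\lambda)}\le s^2(d+2\lambda),
\end{align*}
a sub-Gaussian left tail with variance proxy $2(d+2\lambda)$. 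Markov's inequality at $s=\sqrt{x/(d+2\lambda)}$ then gives
\begin{align*}
  \mP\{T\le d+\lambda-\epsilon\}\le \exp\bigl(-s\epsilon+s^2(d+2\lambda)\bigr).
\end{align*}
Taking $\epsilon=2\sqrt{(d+2\lambda)x}$, the exponent equals $-2x+x=-x$.

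The main obstacle is verifying the elementary logarithm inequalities and the algebra confirming that the chosen $t$ gives exactly $-x$. Both are routine. Alternatively, the whole lemma can simply be cited from Laurent and Massart (2000, Lemma 1) and Birgé (2001).
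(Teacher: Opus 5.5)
Your proof is correct. The noncentral lower tail is the paper's argument step for step. It uses the same Laplace transform of $-s(T-d-\lambda)$ and the same two elementary bounds, $\log(1+2s)\ge 2s-2s^2$ and $s\lambda-s\lambda/(1+2s)=2\lambda s^2/(1+2s)\le 2\lambda s^2$. The Chernoff step also matches: your $s=\sqrt{x/(d+2\lambda)}$ is exactly the minimizer behind the paper's bound $\exp\{-u^2/(4(d+2\lambda))\}$.

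The only difference is the central upper tail. The paper does not prove it but cites equation (4.3) of Laurent and Massart (2000). You derive it directly from the sub-gamma bound $\log\mE e^{t(T-d)}\le dt^2/(1-2t)$ with the explicit choice $t=\sqrt{x}/(\sqrt d+2\sqrt x)$. Your algebra giving exponent exactly $-x$ checks out, so your version is self-contained at the cost of a few extra lines.
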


\begin{proof}
The second display is equation~(4.3) of \citet[p.~1325]{LaurentMassart2000}, stated there as an immediate corollary of their Lemma~1. For the first, write $T=\sum_{j=1}^d(Z_j+\mu_j)^2$, where $\sum_j\mu_j^2=\lambda$. For every $s\ge0$, the moment generating function of $T$ yields
\begin{align*}
  \begin{aligned}
  \log\mE\exp\{-s(T-d-\lambda)\}
  &=
  s(d+\lambda)-\frac d2\log(1+2s)
  -\frac{\lambda s}{1+2s} \\
  &\le
  (d+2\lambda)s^2.
  \end{aligned}
\end{align*}
The last inequality uses $\log(1+2s)\ge2s-2s^2$ and $s\lambda-\lambda s/(1+2s)=2\lambda s^2/(1+2s)\le2\lambda s^2$. Hence, for $u\ge0$, Chernoff's bound gives
\begin{align*}
  \mP\{T-d-\lambda\le-u\}
  \le
  \inf_{s\ge0}\exp\{(d+2\lambda)s^2-su\}
  \le
  \exp\left\{-\frac{u^2}{4(d+2\lambda)}\right\}.
\end{align*}
Taking $u=2\sqrt{(d+2\lambda)x}$ proves the lower-tail bound.
\end{proof}

\begin{lemma}[Gaussian dense-mixture construction]
\label{lem:gaussian-dense-mixture}
Consider \Cref{eq:gaussian-model} in dimension $d$. For $\rho>0$, let $\Pi_\rho$ be the uniform prior on
\begin{align*}
  \Theta_d(\rho)=\left\{\theta\in\R^d:\theta_j\in\left\{\pm\frac{\rho}{\sqrt d}\right\},\ j=1,\dots,d\right\}
\end{align*}
and let $Q_\rho=\int P_\theta\,\Pi_\rho(\dd\theta)$. Then
\begin{align*}
  \KL(P_0\|Q_\rho)+\KL(Q_\rho\|P_0)
  \le
  \frac34\frac{n^2\rho^4}{d}.
\end{align*}
\end{lemma}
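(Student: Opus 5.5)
The plan is to exploit the product structure of the prior and reduce everything to a one-dimensional computation. Since $\Pi_\rho$ is uniform on the hypercube $\Theta_d(\rho)$, the coordinates $\theta_j$ are i.i.d.\ uniform on $\{\pm a\}$ with $a=\rho/\sqrt d$. Hence $Q_\rho=\bigotimes_{j=1}^d Q^{(1)}$, where $Q^{(1)}=\tfrac12 N(a,1/n)+\tfrac12 N(-a,1/n)$, and $P_0=N(0,1/n)^{\otimes d}$. KL divergence tensorizes over product measures in both directions. It therefore suffices to show, with $t=\sqrt n\,a$ (so that $dt^4=n^2\rho^4/d$), the one-dimensional bounds
\begin{align*}
  \KL\bigl(N(0,1/n)\,\big\|\,Q^{(1)}\bigr)\le \frac{t^4}{4},
  \qquad
  \KL\bigl(Q^{(1)}\,\big\|\,N(0,1/n)\bigr)\le \frac{t^4}{2}.
\end{align*}
Summing over $j$ and adding the two directions then gives the constant $\tfrac14+\tfrac12=\tfrac34$.

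After rescaling by $\sqrt n$, the likelihood ratio in one coordinate is $L(z)=e^{-t^2/2}\cosh(tz)$, with $z\sim N(0,1)$ under the null.

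\emph{Null-to-mixture direction.} Here $\KL(N(0,1/n)\|Q^{(1)})=-\mE\log L(Z)=t^2/2-\mE\log\cosh(tZ)$ with $Z\sim N(0,1)$. I will use the elementary inequality $\log\cosh x\ge x^2/2-x^4/12$ for all real $x$. It follows by integrating $\tanh x\ge x-x^3/3$ for $x\ge0$, which in turn follows by integrating $\sech^2x=1-\tanh^2x\ge1-x^2$, using $\tanh x\le x$. With $\mE Z^2=1$ and $\mE Z^4=3$, this gives $\mE\log\cosh(tZ)\ge t^2/2-t^4/4$, hence the bound $t^4/4$.

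\emph{Mixture-to-null direction.} A direct Taylor bound on $\mE_{Q}\log L$ is too crude. Instead I will use $\KL\le\log(1+\chi^2)$, which follows from Jensen's inequality applied to $\mE_Q\log L\le\log\mE_Q L=\log\mE_0L^2$. The second moment is the standard Gaussian computation
\begin{align*}
  \mE_0L^2=e^{-t^2}\,\mE\cosh^2(tZ)=e^{-t^2}\,\frac{1+e^{2t^2}}{2}=\cosh(t^2).
\end{align*}
Combined with $\log\cosh x\le x^2/2$, this gives $\KL(Q^{(1)}\|N(0,1/n))\le\log\cosh(t^2)\le t^4/2$.

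The main point to get right is this second direction. Applying the chi-square bound per coordinate rather than to the full product is essential: for the full product, $\log(1+\chi^2)=d\log\cosh(t^2)$ would also work, but bounding by $\chi^2$ itself would lose linearity in $d$. Equivalently, one can apply the bound to the full product directly, since $\log\mE_{P_0}(\mathrm dQ_\rho/\mathrm dP_0)^2=d\log\cosh(n\rho^2/d)$. Everything else is routine moment calculations.
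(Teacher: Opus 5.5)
Your proposal is correct and follows essentially the same route as the paper: the same one-coordinate likelihood ratio $e^{-t^2/2}\cosh(tz)$, the same inequality $\log\cosh x\ge x^2/2-x^4/12$ (the paper verifies it via $h''(x)=x^2-\tanh^2x\ge0$, which is your double-integration argument), and the same Jensen second-moment bound giving $\log\cosh(t^2)\le t^4/2$ for the reverse direction. The only cosmetic difference is that the paper applies Jensen to the full product likelihood ratio, obtaining $d\log\cosh(t^2)$ directly, rather than tensorizing coordinate by coordinate; as you note yourself, the two are equivalent.
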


\begin{proof}
Set $a=\rho/\sqrt d$, $t=\sqrt n a$, and $X_j=\sqrt nY_j$. Under $P_0$, $X_1,\dots,X_d$ are independent standard normal variables. The likelihood ratio of the one-coordinate mixture $\{\pm a\}$ with respect to $N(0,n^{-1})$ is
\begin{align*}
  \exp(-t^2/2)\cosh(tX_j).
\end{align*}
Therefore, by independence across coordinates,
\begin{align*}
  L_\rho=\frac{\dd Q_\rho}{\dd P_0}
  =
  \prod_{j=1}^d\exp(-t^2/2)\cosh(tX_j).
\end{align*}
For $Z\sim N(0,1)$,
\begin{align*}
  \KL(P_0\|Q_\rho)
  =-\mE_0\log L_\rho
  =d\left\{\frac{t^2}{2}-\mE\log\cosh(tZ)\right\}.
\end{align*}
Since $|\tanh x|\le |x|$, the function $h(x)=\log\cosh x-x^2/2+x^4/12$ satisfies $h''(x)=x^2-\tanh^2x\ge0$ and $h(0)=h'(0)=0$. Hence
\begin{align*}
  \log\cosh x\ge \frac{x^2}{2}-\frac{x^4}{12},
  \qquad x\in\R.
\end{align*}
Therefore,
\begin{align*}
  \mE\log\cosh(tZ)
  \ge
  \frac{t^2}{2}-\frac{t^4}{12}\mE Z^4
  =
  \frac{t^2}{2}-\frac{t^4}{4},
\end{align*}
and hence $\KL(P_0\|Q_\rho)\le dt^4/4$. For the reverse direction, Jensen's inequality gives
\begin{align*}
  \KL(Q_\rho\|P_0)\le \log\mE_0L_\rho^2.
\end{align*}
By independence,
\begin{align*}
  \mE_0L_\rho^2
  =\left\{\mE e^{-t^2}\cosh^2(tZ)\right\}^d
  =\{\cosh(t^2)\}^d.
\end{align*}
Using $\log\cosh u\le u^2/2$,
\begin{align*}
  \KL(Q_\rho\|P_0)\le d\frac{t^4}{2}.
\end{align*}
Adding the two estimates and substituting $t^4=n^2\rho^4/d^2$ proves the claim.
\end{proof}

\subsection{Finite-dimensional dense testing}

\begin{proof}[Proof of \Cref{thm:gaussian-finite-main}] We start with the lower bound and then give the upper bound.

\medskip\noindent\textit{Lower bound.}
Write
\begin{align*}
  r_{\rm mix}=\left(\frac{dJ_{\alpha,\beta}}{n^2}\right)^{1/4},
  \qquad
  r_{\rm par}=\left(\frac{J_{\alpha,\beta}}n\right)^{1/2},
  \qquad
  r=\max\{r_{\rm mix},r_{\rm par}\}.
\end{align*}
Take $\rho=c_-r$, where $c_->0$ is a sufficiently small numerical constant. If $r=r_{\rm par}$, use the single-coordinate alternative $\theta^\star=(\rho,0,\dots,0)$. Then $\theta^\star\in\calG_d(\rho)$ and
\begin{align*}
  \KL(P_0\|P_{\theta^\star})+\KL(P_{\theta^\star}\|P_0)
  =
  n\rho^2
  =
  c_-^2J_{\alpha,\beta}
  <
  J_{\alpha,\beta}.
\end{align*}
If $r=r_{\rm mix}$, use the prior in \Cref{lem:gaussian-dense-mixture}, which is supported on $\calG_d(\rho)$. In this case,
\begin{align*}
  \KL(P_0\|Q_\rho)+\KL(Q_\rho\|P_0)
  \le
  \frac34\frac{n^2\rho^4}{d}
  =
  \frac34c_-^4J_{\alpha,\beta}
  <
  J_{\alpha,\beta}.
\end{align*}
In either branch, \Cref{cor:critical-radius-consequence} yields
\begin{align*}
  \rho^{\rm G}_{d,n}(\alpha,\beta)\ge \rho=c_-r^{\rm G}_{d,n}(\alpha,\beta),
\end{align*}
which proves the lower bound.

\medskip\noindent\textit{Upper bound.}
Let $T= n\sum_{j=1}^dY_j^2$. Under the null, $T\sim\chi^2_d$. Let $a=\log(1/\alpha)$ and reject when $T>d+2\sqrt{da}+2a$. By \Cref{lem:ncx2-tail}, this test has type I error at most $\alpha$.

Under $P_\theta$, $T$ is noncentral chi-square with noncentrality $\lambda=n\|\theta\|_2^2$. For $b=\log(1/\beta)$, \Cref{lem:ncx2-tail} shows that the type II error is at most $\beta$ whenever
\begin{align*}
  d+\lambda-2\sqrt{(d+2\lambda)b}
  \ge
  d+2\sqrt{da}+2a.
\end{align*}
The display is implied by the sufficient condition
\begin{align*}
  \lambda\ge C\{\sqrt{da}+\sqrt{db}+a+b\}
\end{align*}
with a universal constant $C$, since
\begin{align*}
  2\sqrt{(d+2\lambda)b}
  \le
  2\sqrt{db}+2\sqrt{2\lambda b}
  \le
  C'\sqrt{db}+\frac{\lambda}{2}+C'b.
\end{align*}
Since $\lambda\ge n\rho^2$ uniformly over $\calG_d(\rho)$, the stated condition gives the upper bound. Under $\alpha+\beta\le c_*<1$, \Cref{lem:J-asymp} converts the upper condition to the displayed radius scale. 
\end{proof}

\subsection{Sobolev ellipsoids}

\begin{proof}[Proof of \Cref{thm:sobolev-main}] We start with the lower bound and then give the upper bound. Write
\begin{align*}
  r_{\rm np}
  =
  n^{-2\tau/(4\tau+1)}J_{\alpha,\beta}^{\tau/(4\tau+1)},
  \qquad
  r_{\rm par}
  =
  \left(\frac{J_{\alpha,\beta}}n\right)^{1/2},
  \qquad
  r=\max\{r_{\rm np},r_{\rm par}\},
  \qquad
  \bar r=\min\{1,r\}.
\end{align*}

\medskip\noindent\textit{Lower bound.}
The parametric branch is obtained by taking $\theta^\star=(\rho,0,\dots)$, which belongs to $\calE_\tau(M,\rho)$ whenever $\rho\le M$. As in the finite-dimensional proof, the symmetrized Kullback--Leibler divergence between $P_0$ and $P_{\theta^\star}$ is $n\rho^2$.

For the nonparametric branch, choose $d=\lceil (M/\rho)^{1/\tau}/2\rceil$ and consider the hypercube supported on the first $d$ coordinates with entries $\pm\rho/\sqrt d$. For each vertex $\theta$ of this hypercube,
\begin{align*}
  \|\theta\|_2=\rho,
  \qquad
  \sum_{j\ge1}j^{2\tau}\theta_j^2
  \le
  \frac{\rho^2}{d}\sum_{j=1}^d j^{2\tau}
  \le
  \rho^2d^{2\tau}
  \le M^2.
\end{align*}
Here the last inequality uses the assumption $\rho\le M$: with $A=(M/\rho)^{1/\tau}\ge1$, one has $d=\lceil A/2\rceil\le A$. Hence the hypercube is contained in $\calE_\tau(M,\rho)$. By \Cref{lem:gaussian-dense-mixture}, the symmetrized Kullback--Leibler divergence is at most
\begin{align*}
  C\frac{n^2\rho^4}{d}
  \le
  C'n^2M^{-1/\tau}\rho^{4+1/\tau}.
\end{align*}
This is at most a fixed fraction of $J_{\alpha,\beta}$ if
\begin{align*}
  \rho\le c_{\tau,M}n^{-2\tau/(4\tau+1)}J_{\alpha,\beta}^{\tau/(4\tau+1)}.
\end{align*}
Combining the two branches and taking $\rho=c_-\bar r$, with $c_-=c_-(\tau,M,c_*)>0$ sufficiently small, gives a prior supported on $\calE_\tau(M,\rho)$ whose symmetrized Kullback--Leibler divergence from $P_0$ is strictly smaller than $J_{\alpha,\beta}$. Hence \Cref{cor:critical-radius-consequence} yields
\begin{align*}
  \rho^{(\tau,M)}_n(\alpha,\beta)\ge c_-\bar r.
\end{align*}

\medskip\noindent\textit{Upper bound.}
Write
\begin{align*}
  a=\log(1/\alpha),\qquad b=\log(1/\beta),\qquad L= a+b.
\end{align*}
By \Cref{lem:J-asymp}, under $\alpha+\beta\le c_*$ it suffices to prove the result with $L$ in place of $J_{\alpha,\beta}$, with constants depending only on $c_*$ adjusted accordingly.

Choose $C_+>M$. If $r\ge1$, then $C_+\bar r>M$, so $\calE_\tau(M,\rho)$ is empty for every $\rho\ge C_+\bar r$ and the upper bound is immediate. Hence assume $r<1$, so $\bar r=r$, and set $\rho=C_+\bar r=C_+r$. If $\rho>M$, the upper bound is again immediate, so we may assume $\rho\le M$.

Choose $d=\lceil(2M/\rho)^{1/\tau}\rceil$. For every $\theta\in\calE_\tau(M,\rho)$,
\begin{align*}
  \sum_{j>d}\theta_j^2
  \le
  d^{-2\tau}\sum_{j>d}j^{2\tau}\theta_j^2
  \le
  M^2d^{-2\tau}
  \le
  \frac{\rho^2}{4}.
\end{align*}
Hence
\begin{align*}
  \sum_{j=1}^d\theta_j^2\ge \frac{3\rho^2}{4}.
\end{align*}

Consider the truncated quadratic statistic $T_d=n\sum_{j=1}^dY_j^2$. Under $P_0$, $T_d\sim\chi_d^2$. Reject when $T_d>d+2\sqrt{da}+2a$. By \Cref{lem:ncx2-tail}, this test has type I error at most $\alpha$.

Under $P_\theta$, $T_d$ is noncentral chi-square with noncentrality
\begin{align*}
  \lambda_d= n\sum_{j=1}^d\theta_j^2
  \ge \frac34 n\rho^2.
\end{align*}
The finite-dimensional upper-bound calculation gives type II error at most $\beta$ whenever
\begin{align*}
  \lambda_d\ge C\{\sqrt{da}+\sqrt{db}+a+b\}.
\end{align*}
Since
\begin{align*}
  d\le C_\tau\left(\frac{M}{\rho}\right)^{1/\tau},
\end{align*}
it suffices that
\begin{align*}
  n\rho^2
  \ge
  C_\tau M^{1/(2\tau)}\rho^{-1/(2\tau)}L^{1/2}
  +C_\tau L.
\end{align*}
The second term follows from $\rho\ge C(L/n)^{1/2}$. The first term is implied, after increasing the constant depending on $\tau$ and $M$, by
\begin{align*}
  \rho
  \ge
  C_{\tau,M}n^{-2\tau/(4\tau+1)}
  L^{\tau/(4\tau+1)}.
\end{align*}
Thus, after enlarging $C_+=C_+(\tau,M,c_*)$, both conditions hold at $\rho=C_+\bar r$. This gives both type I and type II control.
\end{proof}

\section{Proofs for multinomial uniformity testing}
\label{app:multinomial}

\subsection{Dense-mixture construction and the binomial block bound}

For the dense-mixture construction, assume first that $d$ is even and write $K= d/2$ and $u_d=(1/d,\dots,1/d)$. The fixed-sample multinomial model is
\begin{align*}
  X=(X_1,\dots,X_d)\sim\Mult(n,p),
\end{align*}
or equivalently with independent observations $W_1,\dots,W_n$ from $p$.

\begin{lemma}[Binomial block bound]
\label{lem:binomial-block}
For $s\in\N_0$ and $\delta\in(0,3/4]$, let
\begin{align*}
  P_s=\Bin(s,1/2),
  \qquad
  Q_{s,\delta}=\frac12\Bin\left(s,\frac{1+\delta}{2}\right)+\frac12\Bin\left(s,\frac{1-\delta}{2}\right).
\end{align*}
There exists a universal constant $C>0$ such that
\begin{align*}
  \KL(P_s\|Q_{s,\delta})\le Cs(s-1)\delta^4.
\end{align*}
\end{lemma}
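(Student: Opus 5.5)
The plan is to compute the likelihood ratio of $Q_{s,\delta}$ with respect to $P_s$ in closed form and reduce the KL divergence to a $\log\cosh$ expectation, as in \Cref{lem:gaussian-dense-mixture}. First I dispose of the trivial cases $s\in\{0,1\}$. For $s=0$ both laws are a point mass. For $s=1$ the mixture $\tfrac12\operatorname{Bern}(\tfrac{1+\delta}2)+\tfrac12\operatorname{Bern}(\tfrac{1-\delta}2)$ equals $\operatorname{Bern}(1/2)$. So in both cases $\KL=0=Cs(s-1)\delta^4$. Hence I may assume $s\ge2$, where $s\le s(s-1)$. This is the fact that will let me absorb every term that is only linear in $s$.

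For $k\in\{0,\dots,s\}$, set $Y=2k-s$ and $\eta=\arctanh\delta$, so that $(1+\delta)/(1-\delta)=e^{2\eta}$. Then
\begin{align*}
  (1+\delta)^k(1-\delta)^{s-k}=(1-\delta^2)^{s/2}e^{\eta Y},
\end{align*}
and the likelihood ratio is $\dd Q_{s,\delta}/\dd P_s=(1-\delta^2)^{s/2}\cosh(\eta Y)$. Therefore
\begin{align*}
  \KL(P_s\|Q_{s,\delta})
  =
  -\frac s2\log(1-\delta^2)-\mE\log\cosh(\eta Y),
\end{align*}
where under $P_s$ the variable $Y$ is a sum of $s$ independent Rademacher signs. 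This gives $\mE Y^2=s$ and $\mE Y^4=3s^2-2s$. I then apply the inequality $\log\cosh x\ge x^2/2-x^4/12$, already proved in \Cref{lem:gaussian-dense-mixture}, and obtain
\begin{align*}
  \KL(P_s\|Q_{s,\delta})
  \le
  \frac s2\left\{-\log(1-\delta^2)-\eta^2\right\}
  +\frac{\eta^4(3s^2-2s)}{12}.
\end{align*}

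The remaining step is elementary bounding on the compact range $\delta\in(0,3/4]$. For the quartic term, $\eta=\arctanh\delta\le C\delta$ because $\arctanh$ is convex with bounded slope on $[0,3/4]$. Hence $\eta^4(3s^2-2s)/12\le C\delta^4 s^2\le 2C\delta^4 s(s-1)$ for $s\ge2$. For the linear term, I use $\eta^2\ge\delta^2$ and
\begin{align*}
  -\log(1-\delta^2)\le\delta^2+\frac{\delta^4}{2(1-\delta^2)}\le\delta^2+C\delta^4 .
\end{align*}
The bracket is then at most $C\delta^4$, so the linear term is at most $Cs\delta^4\le Cs(s-1)\delta^4$.

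The only delicate point is getting the $s(s-1)$ factor rather than $s^2$. The bound must vanish at $s=1$, and the crude estimate above does not show that on its own. I handle this by the exact case check at $s\le1$ and the comparison $s\le s(s-1)$ for $s\ge2$. A sharper Taylor expansion would show that the leading coefficient is exactly $s(s-1)\delta^4/4$, but I do not need it.
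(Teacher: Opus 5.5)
Your proof is correct, and it takes a different route from the paper's. Both arguments start from the same closed form $\KL(P_s\|Q_{s,\delta})=s\log\cosh\eta-\mE\log\cosh(\eta T_s)$; your $-\frac s2\log(1-\delta^2)$ is exactly $s\log\cosh\eta$.

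The paper then telescopes in $s$. It writes $F_s-F_{s-1}=g(1)-\mE\Delta(T_{s-1})$, where $g(t)=\log\cosh(\eta t)$ and $\Delta$ is the discrete second difference of $g$. Using $\cosh(a+b)\cosh(a-b)=\cosh^2a+\sinh^2b$, it gets $g(1)-\Delta(t)\le\frac12\sinh^2(\eta)\tanh^2(\eta t)\le C\delta^4t^2$, and then sums $\mE T_{r-1}^2=r-1$ over $r=2,\dots,s$. In that argument the factor $s(s-1)$ arises structurally: each new observation contributes only through the accumulated signal $T_{s-1}$, which is identically zero when $s=1$. No case split is needed. The same increment scheme carries over directly to the continuous smooth-block lemma, where the per-step bound becomes $\gamma^2\mE D_{t-1}^2$.

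Your argument is a one-shot global bound instead. The quartic lower bound on $\log\cosh$ together with $\mE Y^4=3s^2-2s$ gives $O(s^2\delta^4)+O(s\delta^4)$. As you note, this does not vanish at $s=1$, so you recover $s(s-1)$ from the exact identities at $s\in\{0,1\}$ and the comparisons $s\le s(s-1)$ and $s^2\le 2s(s-1)$ for $s\ge2$. Your steps check out: $\eta\le C\delta$ by convexity of $\arctanh$ on $[0,3/4]$, $\eta^2\ge\delta^2$, and $-\log(1-\delta^2)-\delta^2\le\frac87\delta^4$.

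What your route buys is brevity, reuse of an inequality already proved in the Gaussian lemma, and explicit constants. The cost is the case split and a less transparent explanation of why the bound is of order $s(s-1)$ rather than $s^2$.
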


\begin{proof}
Let $\eta=\arctanh(\delta)$. If $X\sim P_s$ and $T_s=2X-s$, then $T_s$ has the same distribution as $\xi_1+\cdots+\xi_s$, where $\xi_1,\dots,\xi_s$ are independent Rademacher variables. Since
\begin{align*}
  1+\delta=\frac{e^\eta}{\cosh\eta},
  \qquad
  1-\delta=\frac{e^{-\eta}}{\cosh\eta},
\end{align*}
a direct calculation gives
\begin{align*}
  \frac{\dd Q_{s,\delta}}{\dd P_s}(X)
  =
  \frac{\cosh(\eta T_s)}{\cosh(\eta)^s}.
\end{align*}
Consequently,
\begin{align*}
  F_s
  \coloneqq
  \KL(P_s\|Q_{s,\delta})
  =
  s\log\cosh\eta-\mE\log\cosh(\eta T_s).
\end{align*}
Clearly, $F_0=F_1=0$. Write $g(t)=\log\cosh(\eta t)$. For $s\ge2$, conditioning on $T_{s-1}$ and using $T_s=T_{s-1}+\xi_s$ gives
\begin{align*}
  F_s-F_{s-1}
  =
  g(1)-\mE\Delta(T_{s-1}),
\end{align*}
where $\Delta(t)=\frac12\{g(t+1)+g(t-1)\}-g(t)$. Using $\cosh(a+b)\cosh(a-b)=\cosh^2a+\sinh^2b$, we obtain
\begin{align*}
  \Delta(t)
  =
  \frac12\log\left\{
  1+\sinh^2(\eta)\sech^2(\eta t)
  \right\}.
\end{align*}
Also, $g(1)=\frac12\log\{1+\sinh^2\eta\}$. Thus, setting $A=\sinh^2\eta$ and $B=\sech^2(\eta t)$, and using $\log(1+u)\le u$,
\begin{align*}
  g(1)-\Delta(t)
  &=
  \frac12\log\left(\frac{1+A}{1+AB}\right)=
  \frac12\log\left(
  1+\frac{A(1-B)}{1+AB}
  \right)\\
  &\le
  \frac12A(1-B)=
  \frac12\sinh^2(\eta)\tanh^2(\eta t).
\end{align*}
Since $\delta\le3/4$,
\begin{align*}
  \sinh^2\eta=\frac{\delta^2}{1-\delta^2}\le C\delta^2,
  \qquad
  |\eta|=\arctanh(\delta)\le C\delta.
\end{align*}
Together with $|\tanh x|\le|x|$, this yields
\begin{align*}
  g(1)-\Delta(t)
  \le
  C\delta^4t^2.
\end{align*}
Therefore,
\begin{align*}
  F_s-F_{s-1}
  \le
  C\delta^4\mE T_{s-1}^2
  =
  C\delta^4(s-1),
\end{align*}
because $T_{s-1}$ is a sum of $s-1$ independent centered Rademacher variables. Summing the increments,
\begin{align*}
  F_s
  \le
  C\delta^4\sum_{r=2}^s(r-1)
  \le
  C s(s-1)\delta^4.
\end{align*}
This proves the claim.
\end{proof}

\subsection{Lower-bound branches}

\begin{proposition}[Dense-mixture branch]
\label{prop:unif-dense-mixture-branch}
There is a universal constant $c>0$ such that, for all $\delta\in(0,1/2]$,
\begin{align*}
  \frac{n^2\delta^4}{d}\le cJ_{\alpha,\beta}
  \quad\Longrightarrow\quad
  R^{\rm unif}_{d,n,\alpha}(\delta)\ge\beta.
\end{align*}
\end{proposition}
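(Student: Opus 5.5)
The plan is to apply \Cref{thm:mixture-lower} (in the form of \Cref{cor:critical-radius-consequence}) to the Paninski sign mixture. I will show that both directed KL divergences between $P_{u_d}$ and the mixture $Q$ are at most $C n^2\delta^4/d$. Then, for $c$ small, their sum is strictly below $J_{\alpha,\beta}$. Applying this with a slightly enlarged $\beta'$, as in the proof of \Cref{cor:critical-radius-consequence}, gives $R^{\rm unif}_{d,n,\alpha}(\delta)\ge\beta$.

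\textbf{Construction.} First take $d$ even, $K=d/2$, and pair the cells as $(2k-1,2k)$. For $\sigma\in\{\pm1\}^K$ set $p_\sigma(2k-1)=(1+\sigma_k\delta)/d$ and $p_\sigma(2k)=(1-\sigma_k\delta)/d$. Then $\|p_\sigma-u_d\|_1=\delta$ exactly, so $p_\sigma$ lies in the alternative, and $Q$ is the uniform mixture over $\sigma$. For odd $d\ge3$, I apply the same construction on the first $d-1$ cells and leave the last cell at mass $1/d$. The normalization then changes only constants, and $\delta\le1/2$ keeps the masses nonnegative.

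\textbf{Direction $\KL(P_{u_d}\|Q)$.} I will use the Poissonization-free decomposition by block totals. Under every $p_\sigma$ and under $u_d$, the vector of block totals $S=(S_1,\dots,S_K)$, with $S_k=X_{2k-1}+X_{2k}$, has the same law $\Mult(n,(2/d,\dots,2/d))$. Conditional on $S$, the within-block splits are independent $\Bin(S_k,(1+\sigma_k\delta)/2)$. Since $\sigma$ has i.i.d.\ coordinates, both $Q$ and $P_{u_d}$ factorize given $S$ into products over blocks, with factors $Q_{S_k,\delta}$ and $P_{S_k}$ as in \Cref{lem:binomial-block}. The chain rule for KL and \Cref{lem:binomial-block} ($\delta\le1/2\le3/4$) then give
\[
\KL(P_{u_d}\|Q)=\mE\sum_k\KL(P_{S_k}\|Q_{S_k,\delta})\le C\delta^4\sum_k\mE S_k(S_k-1)=C\delta^4 K n(n-1)(2/d)^2\le C'n^2\delta^4/d .
\]

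\textbf{Direction $\KL(Q\|P_{u_d})$.} This is the step I expect to be the main obstacle, since \Cref{lem:binomial-block} only covers one direction. I would use the same conditioning together with $\KL\le\log(1+\chi^2)\le\chi^2$ blockwise, exactly as in \Cref{lem:gaussian-dense-mixture}. For a single block with total $s$, the likelihood ratio is $\cosh(\eta T)/\cosh^s\eta$. A direct computation gives $\mE_{P_s}L^2=\frac12\{(1+\delta^2)^s+(1-\delta^2)^s\}$, hence
\[
\log(1+\chi^2)\le\log\cosh$-type bound $\le C s^2\delta^4 .
\]
Summing, $\KL(Q\|P_{u_d})\le C\delta^4\mE\sum_k S_k^2$. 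This has a linear term $n\delta^4$, which is harmless only when $n\delta^4\lesssim n^2\delta^4/d$, i.e.\ $n\gtrsim d$. To avoid this issue I will instead bound with $\frac12\{(1+\delta^2)^s+(1-\delta^2)^s\}-1\le\sum_{j\ge1}\binom{s}{2j}\delta^{4j}$. This kills the $s=1$ term and yields $\chi^2_{\text{block}}\le C s(s-1)\delta^4 e^{s\delta^2}$-type bounds. Alternatively, by joint convexity, I can bound the conditional KL by $\mE_S\log\mE[L^2\mid S]$, which is at most $\log\prod_k(\dots)$. The resulting exponential moment $\mE\exp(C\delta^4\sum_k S_k(S_k-1))$ is controlled under the hypothesis $n^2\delta^4/d\le c$: the $S_k$ are negatively associated multinomial counts with mean $2n/d$, so one can compare them with Poisson variables and expand. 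The condition $n^2\delta^4/d\le c J$ with $J$ possibly large needs care here, since the exponential moment must be bounded by $e^{C n^2\delta^4/d}$. I would verify this via the Poisson MGF, $\mE e^{tS(S-1)}$ for $S\sim\mathrm{Poi}(\mu)$ with $t\mu^2$ small, and fall back on a direct bound if that comparison fails.

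Combining the two directions, the symmetrized KL is at most $C n^2\delta^4/d\le Cc\,J_{\alpha,\beta}<J_{\alpha,\beta}$ once $c<1/C$, which concludes the proof.
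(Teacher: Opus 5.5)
Your construction and your bound on $\KL(P_{u_d}\|Q)$ are exactly the paper's: condition on the block totals, note that these totals have the same law under $P_{u_d}$ and $Q$, and apply \Cref{lem:binomial-block}. The gap is in the reverse direction $\KL(Q\|P_{u_d})$, which you never close. The hypothesis $n^2\delta^4/d\le cJ_{\alpha,\beta}$ permits $J_{\alpha,\beta}\gg d$. It also permits $n\asymp\sqrt{dJ_{\alpha,\beta}}$ with $d$ large. The two routes you end up leaning on break in these regimes.

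\textbf{The $\chi^2$ route.} Bounding the block KL by $\chi^2=\frac12\{(1+\delta^2)^s+(1-\delta^2)^s\}-1$ is too lossy, because this quantity grows like $\frac12e^{s\delta^2}$. Take $d=2$, so that $S_1=n$, with $\delta=1/2$ and $n\delta^2\asymp\sqrt{J_{\alpha,\beta}}$. Then $\chi^2\approx\exp(c\sqrt{J_{\alpha,\beta}})$, which is much larger than $J_{\alpha,\beta}$ once the errors are small.

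\textbf{The exponential-moment route.} The quantity $\mE\exp\{C\delta^4\sum_kS_k(S_k-1)\}$ is not bounded by $e^{Cn^2\delta^4/d}$. The event that all $n$ observations fall in one block has probability $K^{1-n}$ and contributes a factor $e^{C\delta^4n(n-1)}$. With $\delta=1/2$, $J_{\alpha,\beta}\asymp1$ and $n\asymp\sqrt d$, this gives $e^{\Omega(d)}$ instead of $e^{O(1)}$. The Poisson comparison cannot rescue this, since $\mE e^{tS(S-1)}=\infty$ for Poisson $S$ and any $t>0$.

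\textbf{What is missing.} You never need to move $\mE_S$ inside a logarithm. Your own chain-rule setup gives $\KL(Q\|P_{u_d})=\sum_k\mE\,\KL(Q_{S_k,\delta}\|P_{S_k})$, with the expectation outside. Pointwise in $s$, the inequality $\binom{s}{2j}\le\{s(s-1)\}^j/(2j)!$ (the same trick as in \Cref{lem:smooth-block}) gives
\[
\KL(Q_{s,\delta}\|P_s)\le\log\tfrac12\bigl\{(1+\delta^2)^s+(1-\delta^2)^s\bigr\}\le\log\cosh\bigl(\sqrt{s(s-1)}\,\delta^2\bigr)\le\tfrac12 s(s-1)\delta^4 .
\]
This bound has neither a linear term nor an exponential factor. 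Summing over blocks gives $\KL(Q\|P_{u_d})\le n(n-1)\delta^4/(2K)\le n^2\delta^4/d$.

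\textbf{The paper's route.} The paper applies Jensen globally, $\KL(Q\|P_{u_d})\le\log\mE_{u_d}L^2$. It then uses the two-replica identity $\mE_{u_d}L^2=\mE_{\omega,\omega'}\{1+\frac{\delta^2}{K}\sum_k\omega_k\omega'_k\}^n\le\{\cosh(n\delta^2/K)\}^K$. This is the same as computing $\mE_S\prod_k\frac12\{(1+\delta^2)^{S_k}+(1-\delta^2)^{S_k}\}$ exactly. It succeeds because it does not replace each factor by the surrogate $e^{C\delta^4S_k(S_k-1)}$, which is what ruins your exponential moment.

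Either fix completes your argument; as written, the proposal does not establish the reverse-direction bound.
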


\begin{proof}
For $\omega\in\{\pm1\}^K$, define
\begin{align*}
  p^\omega_{2k-1}=\frac{1+\delta\omega_k}{d},
  \qquad
  p^\omega_{2k}=\frac{1-\delta\omega_k}{d},
  \qquad k=1,\dots,K.
\end{align*}
Each $p^\omega$ is a probability vector and $\|p^\omega-u_d\|_1=\delta$. Let $\Pi$ be the uniform prior on $\{\pm1\}^K$ and let
\begin{align*}
  Q=\int P_{p^\omega}\Pi(\dd\omega)
\end{align*}
be the induced mixture.

By \Cref{thm:mixture-lower}, it suffices to upper-bound $\KL(P_{u_d}\|Q)+\KL(Q\|P_{u_d})$. Let
\begin{align*}
  S_k= X_{2k-1}+X_{2k},
  \qquad k=1,\dots,K.
\end{align*}
The block totals have the same law under $P_{u_d}$ and under every $P_{p^\omega}$, hence also under $Q$:
\begin{align*}
  (S_1,\dots,S_K)\sim\Mult\left(n,\frac1K,\dots,\frac1K\right),
\end{align*}
because every pair has total probability $2/d=1/K$. Conditional on $S_k=s$, under $P_{u_d}$,
\begin{align*}
  X_{2k-1}\mid S_k=s\sim\Bin(s,1/2),
\end{align*}
whereas under the mixture $Q$,
\begin{align*}
  X_{2k-1}\mid S_k=s\sim Q_{s,\delta}.
\end{align*}
Thus the block-count marginal is identical under $P_{u_d}$ and $Q$, and the chain rule for Kullback--Leibler divergence gives
\begin{align*}
  \KL(P_{u_d}\|Q)
  =
  \sum_{k=1}^K
  \mE\left[
  \KL\{P_{S_k}\|Q_{S_k,\delta}\}
  \right].
\end{align*}
By \Cref{lem:binomial-block},
\begin{align*}
  \KL(P_{u_d}\|Q)
  \le
  C\delta^4
  \sum_{k=1}^K\mE\{S_k(S_k-1)\}.
\end{align*}
Since $\mE\{S_k(S_k-1)\}=n(n-1)K^{-2}$,
\begin{align*}
  \KL(P_{u_d}\|Q)
  \le
  C\delta^4K\frac{n(n-1)}{K^2}
  \le
  C'\frac{n^2\delta^4}{d}.
\end{align*}

For the reverse direction, let $L=\dd Q/\dd P_{u_d}$. By Jensen's inequality,
\begin{align*}
  \KL(Q\|P_{u_d})\le \log\mE_{u_d}L^2.
\end{align*}
Using the sample representation $W_1,\dots,W_n$, define $b(i)= k$ if $i\in\{2k-1,2k\}$ and
\begin{align*}
  s(i)=
  \begin{cases}
  1,& i=2k-1,\\
  -1,& i=2k.
  \end{cases}
\end{align*}
Then
\begin{align*}
  \frac{\dd P_{p^\omega}}{\dd P_{u_d}}(W_1,\dots,W_n)
  =
  \prod_{\ell=1}^n
  \{1+\delta\,\omega_{b(W_\ell)}s(W_\ell)\}.
\end{align*}
Therefore
\begin{align*}
  \mE_{u_d}L^2
  =
  \mE_{\omega,\omega'}
  \left[
  1+\frac{\delta^2}{K}\sum_{k=1}^K\omega_k\omega'_k
  \right]^n.
\end{align*}
Let $\sigma_k=\omega_k\omega'_k$. Then $\sigma_1,\dots,\sigma_K$ are independent Rademacher variables. Since $1+x\le e^x$,
\begin{align*}
  \mE_{u_d}L^2
  \le
  \mE_\sigma
  \exp\left\{
  \frac{n\delta^2}{K}\sum_{k=1}^K\sigma_k
  \right\}
  =
  \left\{\cosh\left(\frac{n\delta^2}{K}\right)\right\}^K.
\end{align*}
Since $\log\cosh x\le x^2/2$,
\begin{align*}
  \KL(Q\|P_{u_d})
  \le
  K\frac{n^2\delta^4}{2K^2}
  =
  \frac{n^2\delta^4}{2K}
  =
  \frac{n^2\delta^4}{d}.
\end{align*}
Thus the symmetrized Kullback--Leibler divergence is at most $C'n^2\delta^4/d$. This proves the claim when $d$ is even.

If $d$ is odd, take $K=\lfloor d/2\rfloor$, perturb the first $2K$ coordinates exactly as above, and leave the remaining coordinate unperturbed. Rescale the perturbation amplitude by the universal factor $d/(2K)$ so that $\|p^\omega-u_d\|_1=\delta$. The rescaled amplitude is at most $3\delta/2\le3/4$, and since $K\asymp d$, the same block-count calculation gives
\begin{align*}
  \KL(P_{u_d}\|Q)+\KL(Q\|P_{u_d})
  \le
  C''\frac{n^2\delta^4}{d},
\end{align*}
after changing only the universal constant. The proposition follows from \Cref{thm:mixture-lower} after choosing $c$ small enough that the displayed symmetrized divergence is strictly smaller than $J_{\alpha,\beta}$.
\end{proof}

\begin{proposition}[Parametric branch]
\label{prop:unif-parametric-branch}
There is a universal constant $c>0$ such that, for all $\delta\in(0,1/2]$,
\begin{align*}
  n\delta^2\le cJ_{\alpha,\beta}
  \quad\Longrightarrow\quad
  R^{\rm unif}_{d,n,\alpha}(\delta)\ge\beta.
\end{align*}
\end{proposition}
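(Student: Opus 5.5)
The plan is to use a single fixed alternative $p^\star$ with $\|p^\star-u_d\|_1=\delta$ (so $\Pi_1$ is a point mass), bound the symmetrized KL divergence between $P_{u_d}$ and $P_{p^\star}$ by a constant multiple of $n\delta^2$, and then invoke \Cref{thm:mixture-lower} in the same way as in the proof of \Cref{prop:unif-dense-mixture-branch}. For definiteness take $d\ge2$, and let $p^\star$ move mass $\delta/2$ from the second coordinate to the first: $p^\star_1=(1+d\delta/2)/d$, $p^\star_2=(1-d\delta/2)/d$, and $p^\star_j=1/d$ for $j\ge3$. Then $\|p^\star-u_d\|_1=\delta$. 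Nonnegativity of $p^\star_2$ needs $\delta\le 2/d$, which fails when $d$ is large and $\delta$ is near $1/2$, so this perturbation is not usable in general. I will instead spread the perturbation evenly across all coordinates, which keeps the likelihood ratios bounded.

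For $d$ even, choose a fixed sign pattern $\omega\equiv1$ in the construction of \Cref{prop:unif-dense-mixture-branch}, namely $p^\star_{2k-1}=(1+\delta)/d$ and $p^\star_{2k}=(1-\delta)/d$. This gives $\|p^\star-u_d\|_1=\delta$ and likelihood ratios $1\pm\delta\in[1/2,3/2]$. For odd $d$, leave the last coordinate unperturbed and rescale the amplitude by $d/(2K)\le3/2$ as before, so the amplitude is $\delta'\le3/4$. Since the observations are i.i.d., tensorization gives
\begin{align*}
  \KL(P_{u_d}\|P_{p^\star})+\KL(P_{p^\star}\|P_{u_d})
  = n\sum_{j=1}^d (p^\star_j-1/d)\log\frac{p^\star_j}{1/d}.
\end{align*}
The right-hand side is the single-draw Jeffreys divergence multiplied by $n$. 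Each paired block contributes $(2\delta'/d)\log\{(1+\delta')/(1-\delta')\}$. With $\delta'\le3/4$ one has $\log\{(1+x)/(1-x)\}=2\arctanh x\le Cx$, so the block contribution is at most $C\delta'^2/d$. Summing over $K\le d/2$ blocks yields a symmetrized divergence of at most $C n\delta'^2\le C'n\delta^2$, with $C'$ universal.

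Finally, choose $c<1/C'$, so that $n\delta^2\le cJ_{\alpha,\beta}$ implies the symmetrized divergence is strictly smaller than $J_{\alpha,\beta}$. \Cref{thm:mixture-lower} then shows that no level-$\alpha$ test has type II error at most $\beta'$ for any $\beta'$ slightly larger than $\beta$ (by continuity, as in the proof of \Cref{cor:critical-radius-consequence}), so $R^{\rm unif}_{d,n,\alpha}(\delta)\ge\beta$. There is a degenerate boundary case: if $J_{\alpha,\beta}$ makes the hypothesis hold with equality, the strict inequality still holds because $c<1/C'$. The case $n\delta^2=0$ cannot occur since $\delta>0$.

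The main obstacle is making sure the single alternative is a valid probability vector with bounded likelihood ratio uniformly in $d$. Using the natural two-coordinate perturbation would blow up the KL divergence when $d\delta$ is large. The balanced pairwise perturbation above avoids this. The remaining work is the elementary bound $x\arctanh x\le Cx^2$ on $[0,3/4]$.
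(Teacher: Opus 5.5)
Your proposal is correct and is essentially the paper's argument. Both use a single alternative that perturbs $2\lfloor d/2\rfloor$ coordinates by $\pm$ an amplitude at most $3/4$, bound the symmetrized KL divergence by a constant times $n\delta^2$, and apply \Cref{thm:mixture-lower}. The paper computes the divergence through a three-point reduction rather than your tensorized Jeffreys formula, but both give $\frac{nm}{2d}\,a\log\frac{1+a}{1-a}$ with $m=2\lfloor d/2\rfloor$. One harmless slip: each pair actually contributes $(\delta'/d)\log\{(1+\delta')/(1-\delta')\}$, not twice that, which only loosens your bound by a constant.
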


\begin{proof}
Let $m=2\lfloor d/2\rfloor$ and $a=\delta d/m$. Define
\begin{align*}
  p^\star_i=\begin{cases}
  (1+a)/d,&1\le i\le m/2,\\
  (1-a)/d,&m/2<i\le m,\\
  1/d,&m<i\le d.
  \end{cases}
\end{align*}
Then $\|p^\star-u_d\|_1=\delta$. The simple null--alternative pair $(u_d,p^\star)$ therefore suffices for a lower bound. Let
\begin{align*}
  Y_\ell=
  \begin{cases}
  1,&W_\ell\le m/2,\\
  2,&m/2<W_\ell\le m,\\
  3,&m<W_\ell\le d,
  \end{cases}
  \qquad \ell=1,\dots,n.
\end{align*}
Conditional on $Y_\ell$, the locations inside each part are uniform under both laws. Hence the Kullback--Leibler divergences reduce to the corresponding three-point divergences:
\begin{align*}
  \KL(P_{u_d}\|P_{p^\star})=-\frac{nm}{2d}\log(1-a^2),
\end{align*}
and
\begin{align*}
  \KL(P_{p^\star}\|P_{u_d})=\frac{nm}{2d}\{(1+a)\log(1+a)+(1-a)\log(1-a)\}.
\end{align*}
Therefore
\begin{align*}
  \KL(P_{u_d}\|P_{p^\star})+\KL(P_{p^\star}\|P_{u_d})
  \le Cn\frac{m}{d}a^2\le C'n\delta^2,
\end{align*}
for $\delta\le1/2$, since $a\le 3/4$. The result follows from \Cref{thm:mixture-lower} after choosing $c$ small enough that the displayed symmetrized divergence is strictly smaller than $J_{\alpha,\beta}$.
\end{proof}

\subsection{High-confidence upper bound}

\begin{proposition}[Standard fixed-sample uniformity upper bound]
\label{prop:known-unif-upper}
There exists a universal constant $C>0$ such that, for every $d\ge2$, $n\in\N$, $\delta\in(0,1]$, and $\eta\in(0,1/2)$, if
\begin{align*}
  n
  \ge
  C\frac{\sqrt{d\log(1/\eta)}+\log(1/\eta)}{\delta^2},
\end{align*}
then, in the fixed-sample multinomial model, there is a test $\phi$ satisfying
\begin{align*}
  P_{u_d}(\phi=1)\le\eta,
  \qquad
  \sup_{p:\,\|p-u_d\|_1\ge\delta}P_p(\phi=0)\le\eta.
\end{align*}
\end{proposition}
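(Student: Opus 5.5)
Since the statement is labelled ``standard,'' the plan is to reproduce the high-confidence collision/empirical-distance tester of \citet{DiakonikolasGouleakisPeeblesPrice2018} in a self-contained form, splitting into two regimes according to which term of the sample-size condition dominates. Write $L=\log(1/\eta)$.

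\textbf{Dense regime} ($d\ge L$, so the condition reads $n\ge C\sqrt{dL}/\delta^2$). Here I would use the statistic of \citet{DiakonikolasGouleakisPeeblesPrice2018}, namely the empirical total variation $T=\frac12\sum_i|X_i/n-1/d|$, and reject when $T>\mE_{u_d}T+\tau$ for a suitable threshold $\tau$. There are three ingredients.
\begin{enumerate}[label=(\roman*)]
\item Changing one sample $W_\ell$ changes $T$ by at most $1/n$. McDiarmid's inequality therefore gives deviations of order $\sqrt{L/n}$ with probability $1-\eta$, under both the null and the alternative.
\item The mean gap $\mE_pT-\mE_{u_d}T$ is at least a constant multiple of $\min\{\delta,\ \delta^2 n/\sqrt{d}\}$ whenever $\|p-u_d\|_1\ge\delta$. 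This is the main technical lemma of that paper, proved coordinatewise by comparing $\mE|X_i/n-p_i|$ with $\mE|X_i/n-1/d|$, with separate cases for heavy and light coordinates.
\item It remains to show the gap exceeds $C'\sqrt{L/n}$, and this follows from $n\ge C\sqrt{dL}/\delta^2$. In the case where the minimum is $\delta$, we need $\delta\gtrsim\sqrt{L/n}$, i.e.\ $n\gtrsim L/\delta^2$. In the other case, $\delta^2 n/\sqrt d\gtrsim\sqrt{L/n}$ is equivalent to $n^{3/2}\gtrsim\sqrt{dL}/\delta^2$, and this is implied because $n\ge\sqrt{dL}/\delta^2$ and $n\ge1$.
\end{enumerate}
Setting the threshold at the midpoint of the two means controls both errors at level $\eta$.

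\textbf{Sparse regime} ($L>d$, so the condition reads $n\ge CL/\delta^2$). Here the problem is effectively parametric and a simpler test suffices, still the empirical $L_1$ statistic. By the Bretagnolle--Huber--Carol inequality, $\|X/n-p\|_1\le\sqrt{2(d\log2+L)/n}\lesssim\sqrt{L/n}$ with probability $1-\eta$. If we reject when $\|X/n-u_d\|_1>\delta/2$, then with $n\ge CL/\delta^2$ both errors are at most $\eta$ by the triangle inequality. This bound actually covers every $n\ge C(d+L)/\delta^2$, so it also takes care of the dense case whenever $\delta\gtrsim$ const.

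\textbf{Main obstacle.} The hard step is the expectation-gap lemma (ii), which is the delicate part of the referenced work. Rather than re-derive it, I would cite it, or else replace it by a collision statistic $\sum_i X_i(X_i-1)$ analysed through a Bernstein/Hanson--Wright-type concentration inequality for U-statistics. That alternative route needs care because heavy-tailed coordinates inflate the variance, which is exactly why the paper recalls this result instead of reproving it.
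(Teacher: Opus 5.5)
\textbf{There is a genuine gap in your reconstruction of the regime $d\ge L$.} Your overall route agrees with the paper's, whose proof is a one-line appeal to the high-confidence identity-testing theorem of \citet{DiakonikolasGouleakisPeeblesPrice2018}, with target $u_d$ and total-variation separation $\delta/2$. Your Bretagnolle--Huber--Carol argument for $L>d$ is also correct.

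The first problem is that the expectation-gap lemma (ii) is misquoted. In your notation ($n$ samples, $d$ cells, $\ell_1$-separation $\delta$), the available bound is of order $\delta^2\min\{n^2/d^2,\sqrt{n/d},1/\delta\}$, not $\min\{\delta,\delta^2 n/\sqrt d\}$. Your version is false. Take $p_i=(1\pm\delta)/d$ and $n=d$. Then $T$ is the fraction of empty cells, and $\mE_pT-\mE_{u_d}T\approx e^{-1}(\cosh\delta-1)\asymp\delta^2$. This is far below $\delta$ when $d^{-1/2}\le\delta\ll1$. The slack you noticed in (iii), where only $n^{3/2}\gtrsim\sqrt{dL}/\delta^2$ is used, is the symptom. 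Your argument would produce a test with $n\asymp(\sqrt{dL}/\delta^2)^{2/3}+L/\delta^2$ samples, and \Cref{prop:unif-dense-mixture-branch} rules this out (e.g.\ $\delta=1/2$ and $\eta$ fixed give $n\asymp d^{1/3}\ll\sqrt d$).

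The second problem remains even with the correct gap: plain McDiarmid closes the argument only when $n\gtrsim d$. In that range, $\delta^2\sqrt{n/d}\gtrsim\sqrt{L/n}$ is exactly $n\gtrsim\sqrt{dL}/\delta^2$, and $\delta\gtrsim\sqrt{L/n}$ is $n\gtrsim L/\delta^2$. Your dense regime also contains $n<d$, for instance $\delta,\eta$ fixed and $n\asymp\sqrt d$ with $d\to\infty$.
\begin{itemize}
\item There the gap is only of order $\delta^2n^2/d^2$. Beating the McDiarmid fluctuation $\sqrt{L/n}$ would need $n^{5/2}\gtrsim d^2\sqrt L/\delta^2$, which fails in that example.
\item For $n\le d$, $T$ equals the fraction of empty cells, so the bounded difference improves to $1/d$. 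The resulting fluctuation $\sqrt{nL}/d$ is still too large.
\end{itemize}
What is missing is a variance-sensitive (Bernstein/Freedman-type) concentration step. The number of empty cells equals $d-n$ plus the number of samples that land in an already occupied cell. Under the null, this quantity fluctuates on the scale $n/\sqrt d$ rather than $\sqrt n$. That gives deviations of $T$ of order $n\sqrt L/d^{3/2}+L/d$, which is below $\delta^2n^2/d^2$ once $n\gtrsim\sqrt{dL}/\delta^2$. You also need the matching lower-tail bound under alternatives. Either supply this argument or simply cite the theorem itself, as the paper does.

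Separately, your remark that Bretagnolle--Huber--Carol covers the dense case for constant $\delta$ is not right. That bound needs $n\gtrsim d$, while the hypothesis only gives $n\gtrsim\sqrt{dL}$.
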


\begin{proof}
This follows from the high-confidence identity-testing theorem of \citet{DiakonikolasGouleakisPeeblesPrice2018}, applied with target distribution $u_d$ and total-variation separation $\delta/2$.
\end{proof}

\subsection[Proof of multinomial uniformity theorem]{Proof of \Cref{thm:unif-main}}

\begin{proof}
Write
\begin{align*}
  r_{\rm mix}
  =
  \left(\frac{dJ_{\alpha,\beta}}{n^2}\right)^{1/4},
  \qquad
  r_{\rm par}
  =
  \left(\frac{J_{\alpha,\beta}}n\right)^{1/2},
  \qquad
  r=\max\{r_{\rm mix},r_{\rm par}\},
  \qquad
  \bar r=\min\{1,r\}.
\end{align*}

For the lower bound, take $\delta=c_-\bar r$ with $c_-\in(0,1/2]$ to be chosen, and let $A=n\delta^2$. By \Cref{prop:unif-dense-mixture-branch,prop:unif-parametric-branch}, it suffices that either
\begin{align*}
  A^2/d\le c_1J_{\alpha,\beta}
  \qquad\text{or}\qquad
  A\le c_2J_{\alpha,\beta}.
\end{align*}
Since $\bar r\le r$ and, when $r\ge1$, either $\sqrt{dJ_{\alpha,\beta}}\ge n$ or $J_{\alpha,\beta}\ge n$, choosing $c_-$ sufficiently small gives
\begin{align*}
  A\le c \Bigl\{\sqrt{dJ_{\alpha,\beta}}+J_{\alpha,\beta} \Bigr\}.
\end{align*}
If $A\le c_2J_{\alpha,\beta}$, the parametric branch applies. Otherwise $A>c_2J_{\alpha,\beta}$, so $J_{\alpha,\beta}\le A/c_2$, and hence
\begin{align*}
  A\le c\sqrt{dJ_{\alpha,\beta}}+\frac{c}{c_2}A.
\end{align*}
Taking $c<c_2/2$ gives $A\le 2c\sqrt{dJ_{\alpha,\beta}}$, whence $A^2/d\le 4c^2J_{\alpha,\beta}$. Choosing $c$ sufficiently small so that $4c^2\le c_1$, the dense-mixture branch applies. Inspecting the proofs of the branch propositions, the applicable branch provides a prior supported on alternatives at $L_1$-distance $\delta$ from $u_d$ whose symmetrized Kullback--Leibler divergence from $P_{u_d}$ is strictly smaller than $J_{\alpha,\beta}$. Therefore \Cref{cor:critical-radius-consequence} yields
\begin{align*}
  \delta^{\rm unif}_{d,n}(\alpha,\beta)\ge c_-\bar r.
\end{align*}

For the upper bound, choose $C_+>1$. If $r\ge1$, then $\delta^{\rm unif}_{d,n}(\alpha,\beta)\le1\le C_+\bar r$ by the capped definition of the critical radius. Hence assume $r<1$, so $\bar r=r$, and put $\delta=C_+\bar r=C_+r$. If $\delta>1$, the same capped bound is immediate, so assume $\delta\le1$.

Let $\eta=\alpha\wedge\beta$. Since $\alpha+\beta\le c_*<1$, one has $\eta\le(\alpha+\beta)/2\le c_*/2<1/2$, as required by \Cref{prop:known-unif-upper}. \Cref{lem:J-asymp} implies
\begin{align*}
  \log(1/\eta)
  \le
  \log(1/\alpha)+\log(1/\beta)
  \lesssim_{c_*}
  J_{\alpha,\beta}.
\end{align*}
At $\delta=C_+r$,
\begin{align*}
  n\delta^2
  =
  C_+^2nr^2
  \ge
  C_+^2\max\{\sqrt{dJ_{\alpha,\beta}},J_{\alpha,\beta}\}.
\end{align*}
Choosing $C_+=C_+(c_*)$ sufficiently large therefore implies the condition of \Cref{prop:known-unif-upper}. The resulting test has both errors at most $\eta$, and hence is level $\alpha$ with type II error at most $\beta$.
\begin{align*}
  \delta^{\rm unif}_{d,n}(\alpha,\beta)\le C_+\bar r.
\end{align*}
\end{proof}

\section{Proofs for H\"older uniformity testing}
\label{app:holder}

\subsection{Lower-bound ingredient}

The following lemma is a continuous block analogue of the symmetric two-point mixture calculation used in the multinomial lower bound. We record it separately for use on rescaled H\"older bump cells.

\begin{lemma}[Smooth block mixture]
\label{lem:smooth-block}
Let $(\calX,\mu)$ be a probability space, and let $h\in L_\infty(\mu)$ satisfy
\begin{align*}
  \int h\,\dd\mu=0,
  \qquad
  \int h^2\,\dd\mu=1.
\end{align*}
Assume that $h(U)$ and $-h(U)$ have the same distribution when $U\sim\mu$. For $0<\gamma\le(2\|h\|_\infty)^{-1}$, define
\begin{align*}
  \dd\nu_\pm=(1\pm\gamma h)\,\dd\mu.
\end{align*}
For each integer $t\ge0$, let
\begin{align*}
  P_t=\mu^{\otimes t},
  \qquad
  Q_{t,\gamma}=\frac12\nu_+^{\otimes t}+\frac12\nu_-^{\otimes t}.
\end{align*}
There is a universal constant $C>0$ such that
\begin{align*}
  \KL(P_t\|Q_{t,\gamma})+\KL(Q_{t,\gamma}\|P_t)
  \le Ct(t-1)\gamma^4.
\end{align*}
\end{lemma}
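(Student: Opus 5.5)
I will treat the two directions separately, mirroring the structure used for \Cref{lem:binomial-block} and \Cref{prop:unif-dense-mixture-branch}. Write $L_t=\dd Q_{t,\gamma}/\dd P_t$. For $x=(x_1,\dots,x_t)$,
\begin{align*}
  L_t(x)=\frac12\prod_{\ell=1}^t\{1+\gamma h(x_\ell)\}+\frac12\prod_{\ell=1}^t\{1-\gamma h(x_\ell)\}.
\end{align*}
The condition $\gamma\|h\|_\infty\le1/2$ guarantees $1\pm\gamma h\in[1/2,3/2]$, so all logarithms are well defined with bounded Taylor remainders.

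\emph{Reverse direction.} I will bound $\KL(Q_{t,\gamma}\|P_t)\le\log\mE_{P_t}L_t^2$ by Jensen, as in \Cref{lem:gaussian-dense-mixture}. Expanding the square and using independence, $\int(1+\epsilon\gamma h)(1+\epsilon'\gamma h)\,\dd\mu=1+\epsilon\epsilon'\gamma^2$ because $\int h\,\dd\mu=0$ and $\int h^2\,\dd\mu=1$. This gives
\begin{align*}
  \mE_{P_t}L_t^2=\tfrac12(1+\gamma^2)^t+\tfrac12(1-\gamma^2)^t.
\end{align*}
Writing this as $(1+\gamma^2)^t\cdot\tfrac12\{1+r^t\}$ with $r=(1-\gamma^2)/(1+\gamma^2)$ and bounding directly, or simply expanding binomially (odd powers cancel), gives $\sum_{j\text{ even}}\binom tj\gamma^{2j}\le\cosh(t\gamma^2)\cdot$const. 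However, $\log\cosh(t\gamma^2)\le t^2\gamma^4/2$ only yields $t^2$, not $t(t-1)$. To repair this at $t=1$, note that $Q_{1,\gamma}=\mu=P_1$, so the divergence vanishes there; and for $t\ge2$ one has $t^2\le2t(t-1)$. Hence $Ct(t-1)\gamma^4$ holds for all $t$ once $\log\mE L_t^2\le\frac12 t^2\gamma^4$ is verified via $1+x\le e^x$ on each binomial term: $\sum_{j\,\mathrm{even}}\binom tj\gamma^{2j}\le\sum_{j\,\mathrm{even}}(t\gamma^2)^j/j!=\cosh(t\gamma^2)$.

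\emph{Forward direction.} For $\KL(P_t\|Q_{t,\gamma})$ I will reuse the telescoping argument of \Cref{lem:binomial-block}. Set $a_\ell=\tfrac12\log\frac{1+\gamma h(x_\ell)}{1-\gamma h(x_\ell)}$ and $S_t=\sum_\ell a_\ell$. Then
\begin{align*}
  L_t=\prod_\ell\sqrt{1-\gamma^2h(x_\ell)^2}\;\cosh(S_t),
\end{align*}
so that
\begin{align*}
  \KL(P_t\|Q_{t,\gamma})=-\tfrac t2\mE\log(1-\gamma^2h^2)-\mE\log\cosh S_t.
\end{align*}
Under $\mu$, the $a_\ell$ are i.i.d.\ and symmetric, because $h(U)$ and $-h(U)$ have the same distribution, with $|a_\ell|\le C\gamma|h|$. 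Define $F_t$ as the divergence, so that $F_0=F_1=0$; the case $t=1$ again follows from $Q_{1,\gamma}=\mu$. The increment is
\begin{align*}
  F_t-F_{t-1}=\mE\Bigl[-\tfrac12\log(1-\gamma^2h^2)-\tfrac12\{\log\cosh(S_{t-1}+a)+\log\cosh(S_{t-1}-a)\}+\log\cosh S_{t-1}\Bigr],
\end{align*}
using symmetry of $a$. By the identity $\cosh(s+a)\cosh(s-a)=\cosh^2s+\sinh^2a$, the bracket becomes $\tfrac12\log\frac{1+\sinh^2a}{1+\sinh^2a\,\sech^2 S_{t-1}}$, once we check that $-\tfrac12\log(1-\gamma^2h^2)=\tfrac12\log\cosh^2 a$. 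This holds since $\cosh^2a=1/(1-\gamma^2h^2)$. As before, the bracket is at most $\tfrac12\sinh^2(a)\tanh^2(S_{t-1})\le C\gamma^2h(x_t)^2S_{t-1}^2$. Taking expectations with independence gives $C\gamma^2\cdot\mE S_{t-1}^2\le C\gamma^4(t-1)$, using $\mE a^2\le C\gamma^2\int h^2=C\gamma^2$ and the fact that the $a_\ell$ are centered. Summing over $t$ yields $Ct(t-1)\gamma^4$.

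The main obstacle is the forward direction: the product representation of $L_t$ and the $\cosh$ identity must be checked carefully in the continuous, non-constant-$h$ setting. The remaining steps reduce to the one-dimensional computations already carried out in \Cref{lem:binomial-block} and \Cref{prop:unif-dense-mixture-branch}.
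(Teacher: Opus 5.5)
Your proof is correct and follows essentially the same route as the paper. The forward direction telescopes $F_t-F_{t-1}$ and bounds the symmetrized increment; your $\cosh$-identity expression $\tfrac12\log\{(1+\sinh^2a_t)/(1+\sinh^2a_t\,\sech^2S_{t-1})\}$ equals the paper's $-\tfrac12\log(1-\gamma^2D_{t-1}^2H_t^2)$ with $D_{t-1}=\tanh S_{t-1}$. The reverse direction uses Jensen with the exact second moment $\tfrac12(1+\gamma^2)^t+\tfrac12(1-\gamma^2)^t$, as the paper does. The only difference there is that you bound this second moment by $\cosh(t\gamma^2)$ and recover the factor $t(t-1)$ by treating $t=1$ separately via $Q_{1,\gamma}=\mu$. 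The paper instead gets $\cosh\{\sqrt{t(t-1)}\,\gamma^2\}$ directly from $(t)_{2k}\le\{t(t-1)\}^k$.
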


\begin{proof}
Let $H_i= h(U_i)$ for independent $U_i\sim\mu$, and write
\begin{align*}
  A_t^\pm=\prod_{i=1}^t(1\pm\gamma H_i),
  \qquad
  L_t=\frac{\dd Q_{t,\gamma}}{\dd P_t}=\frac{A_t^++A_t^-}{2}.
\end{align*}
Set $F_t=\mE[-\log L_t]$. Then $F_0=F_1=0$. Define
\begin{align*}
  D_{t-1}=\frac{A_{t-1}^+-A_{t-1}^-}{A_{t-1}^++A_{t-1}^-}.
\end{align*}
Since $L_t=L_{t-1}(1+\gamma D_{t-1}H_t)$, symmetry of $H_t$ gives, conditionally on $D_{t-1}$,
\begin{align*}
  \mE\{-\log(1+\gamma D_{t-1}H_t)\mid D_{t-1}\}
  &=-\frac12\mE\{\log(1-\gamma^2D_{t-1}^2H_t^2)\mid D_{t-1}\}\\
  &\le \gamma^2D_{t-1}^2\mE H_t^2
  =\gamma^2D_{t-1}^2.
\end{align*}
Here we used $|\gamma D_{t-1}H_t|\le1/2$ and $-\frac12\log(1-z)\le z$ for $0\le z\le1/4$. Hence
\begin{align*}
  F_t-F_{t-1}\le\gamma^2\mE D_{t-1}^2.
\end{align*}
Moreover,
\begin{align*}
  D_{t-1}
  =
  \tanh\left\{\sum_{i<t}\arctanh(\gamma H_i)\right\}.
\end{align*}
The summands are independent and centered, because $\arctanh$ is odd and $H_i$ is symmetric. Therefore, using $|\tanh z|\le|z|$ and $|\arctanh z|\le2|z|$ for $|z|\le1/2$,
\begin{align*}
  \mE D_{t-1}^2
  \le
  \mE\left\{\sum_{i<t}\arctanh(\gamma H_i)\right\}^2
  \le4(t-1)\gamma^2\mE H_1^2
  =4(t-1)\gamma^2.
\end{align*}
Summing the increments yields
\begin{align*}
  \KL(P_t\|Q_{t,\gamma})=F_t
  \le2t(t-1)\gamma^4.
\end{align*}

For the reverse direction, Jensen's inequality under $Q_{t,\gamma}$ gives
\begin{align*}
  \KL(Q_{t,\gamma}\|P_t)
  \le\log\mE_{P_t}L_t^2.
\end{align*}
Independence and the identities $\mE H_i=0$, $\mE H_i^2=1$ imply
\begin{align*}
  \mE_{P_t}L_t^2
  =
  \frac12(1+\gamma^2)^t+\frac12(1-\gamma^2)^t.
\end{align*}
With $x=\gamma^2$,
\begin{align*}
  \frac{(1+x)^t+(1-x)^t}{2}
  &=\sum_{k\ge0}\binom{t}{2k}x^{2k}\\
  &\le
  \sum_{k\ge0}\frac{\{t(t-1)\}^kx^{2k}}{(2k)!}
  =
  \cosh\Bigl\{\sqrt{t(t-1)}x\Bigr\},
\end{align*}
where $(t)_{2k}\le\{t(t-1)\}^k$. Since $\log\cosh z\le z^2/2$,
\begin{align*}
  \KL(Q_{t,\gamma}\|P_t)
  \le\frac12t(t-1)\gamma^4.
\end{align*}
Combining the two bounds proves the lemma.
\end{proof}

\subsection[Proof of H\"older uniformity theorem]{Proof of \Cref{thm:holder-main}}

\begin{proof}
Write
\begin{align*}
  r_{\rm np}
  =
  n^{-2s/(4s+q)}J_{\alpha,\beta}^{s/(4s+q)},
  \qquad
  r_{\rm par}
  =
  \left(\frac{J_{\alpha,\beta}}n\right)^{1/2},
  \qquad
  r=\max\{r_{\rm np},r_{\rm par}\},
  \qquad
  \bar r=\min\{1,r\}.
\end{align*}

\medskip\noindent
\textit{Lower bound.} For $u=(u_1,\dots,u_q)$, choose a nonzero $b\in C_c^\infty((0,1)^q)$ supported in $\{u\in(0,1)^q:u_1<1/3\}$, and set
\begin{align*}
  a(u)= c\{b(u)-b(1-u_1,u_2,\dots,u_q)\},
\end{align*}
where $c>0$ normalizes $\|a\|_1=1$. The supports of $b(u)$ and $b(1-u_1,u_2,\dots,u_q)$ are disjoint, so $a\ne0$. Moreover, with $\sigma_a^2=\int a^2\in(0,\infty)$,
\begin{align*}
  \int a=0,
  \qquad
  \int |a|=1,
  \qquad
  a(1-u_1,u_2,\dots,u_q)=-a(u_1,u_2,\dots,u_q).
\end{align*}
Thus $a(U)$ and $-a(U)$ have the same distribution for uniform $U\in[0,1]^q$.

For the parametric branch, take $f^\star=1+\rho a$. There is $\rho_0>0$, depending only on $a,M,B$, such that $f^\star\in \calH_1^s(M,B,\rho)$ whenever $0<\rho\le\rho_0$, because $\|f^\star-1\|_1=\rho$. If also $\rho\|a\|_\infty\le1/2$, then
\begin{align*}
  \KL(P_0\|P_{f^\star})+\KL(P_{f^\star}\|P_0)
  &=
  n\int \rho a\log(1+\rho a)\\
  &\le
  2n\rho^2\int a^2
  =2\sigma_a^2n\rho^2.
\end{align*}
Hence, at $\rho=c_-r_{\rm par}$, the symmetrized divergence is at most a fixed fraction of $J_{\alpha,\beta}$ when $c_->0$ is sufficiently small.

For the nonparametric branch, extend $a$ by zero to $\mathbb R^q$. Since $a$ is compactly supported in $(0,1)^q$, this extension is $C^s$. For $m\ge2$ and $k\in\{0,\dots,m-1\}^q$, define the cells
\begin{align*}
  C_{m,k}= m^{-1}(k+[0,1]^q),
  \qquad
  a_{m,k}(x)= a(mx-k),
  \qquad x\in[0,1]^q.
\end{align*}
Let $\delta_a>0$ be the distance from $\operatorname{supp}(a)$ to the boundary of $[0,1]^q$. The bump supports are disjoint and lie at distance at least $\delta_a/m$ from their cell boundaries. Consequently, there is a constant $C_a$ such that
\begin{equation}
\label{eq:holder-bump-scaling-L1}
\left\|\sum_kc_ka_{m,k}\right\|_{C^s}
\le C_am^s\max_k|c_k|.
\end{equation}
To verify the seminorm, let $F=\sum_kc_ka_{m,k}$. If $x,y$ lie in the same cell, the claim follows by scaling the $C^s$ seminorm of the zero extension of $a$. If they lie in different cells and at least one of $F(x),F(y)$ is nonzero, then $\|x-y\|_\infty\ge\delta_a/m$, so
\begin{align*}
  |F(x)-F(y)|
  \le2\|a\|_\infty\max_k|c_k|
  \le2\|a\|_\infty\delta_a^{-s}m^s\max_k|c_k|\,\|x-y\|_\infty^s.
\end{align*}
If both values vanish, then \(F(x)=F(y)=0\), and the bound is immediate. The supremum norm satisfies
\begin{align*}
  \|F\|_\infty
  \le
  \|a\|_\infty\max_k|c_k|
  \le
  \|a\|_\infty m^s\max_k|c_k|,
\end{align*}
since \(m\ge1\), and is therefore absorbed into the constant \(C_a\).

For sufficiently small $\rho$, put
\begin{align*}
  R_\rho=
  \left(\frac{M}{2C_a\rho}\right)^{1/s},
  \qquad
  m=\lfloor R_\rho\rfloor.
\end{align*}
Taking the small-radius constant so that $R_\rho\ge4$ gives $m\ge R_\rho/2\ge2$. Hence $\rho C_am^s\le M/2$ and
\begin{equation}
\label{eq:holder-number-bumps-lower-L1}
m^q\ge c_{q,s,a}\left(\frac M\rho\right)^{q/s}.
\end{equation}
For $\omega\in\{\pm1\}^{m^q}$, define
\begin{align*}
  f_\omega(x)=1+\rho\sum_k\omega_ka_{m,k}(x).
\end{align*}
By \Cref{eq:holder-bump-scaling-L1}, disjointness, and $\int a=0$, every $f_\omega$ is a density and
\begin{align*}
  \|f_\omega-1\|_{C^s}\le M.
\end{align*}
Moreover, disjointness gives
\begin{align*}
  \|f_\omega-1\|_1
  &=
  \rho\sum_k\int_{C_{m,k}}|a_{m,k}(x)|\,\dd x\\
  &=
  \rho m^qm^{-q}\int_{[0,1]^q}|a(u)|\,\dd u
  =\rho.
\end{align*}
After reducing the small-radius constant so that
\begin{align*}
  \rho\|a\|_\infty\le\min\left\{\frac12,B-1\right\},
\end{align*}
we also have $0\le f_\omega\le B$. Thus every constructed alternative \(f_\omega\) belongs to \(\calH_1^s(M,B,\rho)\).

Let $Q_m^\circ$ be the uniform mixture over $\omega$:
\begin{align*}
  Q_m^\circ
  \coloneqq
  2^{-m^q}\sum_{\omega\in\{\pm1\}^{m^q}} P_{f_\omega}.
\end{align*}
Equivalently, $Q_m^\circ$ is the law obtained by first sampling $\omega$ uniformly from $\{\pm1\}^{m^q}$ and then sampling $X_1,\dots,X_n$ independently from $f_\omega$. Every cube has mass $m^{-q}$ under the null and under every $f_\omega$. Set
\begin{align*}
  h=\frac a{\sigma_a},
  \qquad
  \gamma=\rho\sigma_a.
\end{align*}
Then $1\pm\gamma h=1\pm\rho a$, $\int h^2=1$, and $h(U)$ is symmetric. Condition on the full vector of cell labels of the $n$ observations. Its law is the same under the null and the mixture, and, conditional on it, the rescaled observations in cell $k$ have the block laws $P_{N_k}$ and $Q_{N_k,\gamma}$, independently across $k$. The posterior law of the signs remains the product uniform law because the cell probabilities do not depend on $\omega$. Here $C_{m,k}$ denotes the $k$th spatial cell, whereas $Q_{t,\gamma}$ denotes the block mixture law from \Cref{lem:smooth-block}. Thus $Q_{N_k,\gamma}$ is not a cell; it is the conditional law of the $N_k$ rescaled observations falling in cell $k$ after averaging over the random sign $\omega_k$. The KL chain rule in both directions and \Cref{lem:smooth-block} give
\begin{align*}
  \KL(P_0\|Q_m^\circ)+\KL(Q_m^\circ\|P_0)
  &\le
  C_a'\rho^4\mE_0\sum_kN_k(N_k-1)\\
  &=
  C_a'\rho^4\frac{n(n-1)}{m^q}\\
  &\le
  C_{q,s,a}'n^2M^{-q/s}\rho^{4+q/s},
\end{align*}
where the last line uses \Cref{eq:holder-number-bumps-lower-L1}. At $\rho=c_-r_{\rm np}$, the final expression is at most $C_{q,s,M,a}c_-^{4+q/s}J_{\alpha,\beta}$. Thus it is at most $J_{\alpha,\beta}/2$ for sufficiently small $c_-$.

Take now $\rho=c_-\bar r$. If $r=r_{\rm par}$, then $\rho\le c_-r_{\rm par}$, so the parametric bound applies. If $r=r_{\rm np}$, then $\rho\le c_-r_{\rm np}$, so the nonparametric mixture bound applies. Choosing $c_-$ sufficiently small ensures all preceding small-radius conditions. Hence, at this radius, we may use either the parametric construction or the nonparametric construction, according to which branch gives $\bar r$; in both cases the resulting prior is supported on $\calH_1^s(M,B,\rho)$ and has symmetrized divergence at most $J_{\alpha,\beta}/2$. \Cref{cor:critical-radius-consequence} then yields
\begin{align*}
  \rho_n^{(s,M,B)}(\alpha,\beta)\ge c_-\bar r,
\end{align*}
which proves the lower bound.

\medskip\noindent
\textit{Upper bound.} Set $\eta=\alpha\wedge\beta$ and $x=\log(1/\eta)$. By \Cref{lem:J-asymp} and the assumption $\alpha+\beta\le c_*<1$,
\begin{equation}
\label{eq:x-comparable-J-holder-L1}
x\asymp_{c_*}J_{\alpha,\beta}.
\end{equation}
Indeed, $x=\max\{\log(1/\alpha),\log(1/\beta)\}$ and $x\le\log(1/\alpha)+\log(1/\beta)\le2x$.

Set $\rho=C_+\bar r$. Since the $L_1$ distance between two densities is at most $2$, the claim is immediate if $\rho>2$. We choose $C_+>2$, so this covers the case $r\ge1$. Hence, in the remaining case,
\begin{align*}
  0<\rho\le2,
  \qquad
  \bar r=r.
\end{align*}
Let $f\in\calH_1^s(M,B,\rho)$ and write $g=f-1$. Choose
\begin{align*}
  m
  =
  \left\lceil
  \max\left\{
  2,
  \left(\frac{2M}{\rho}\right)^{1/s}
  \right\}
  \right\rceil,
  \qquad
  K=m^q.
\end{align*}
Let $\mathcal Q_m$ be the regular partition of $[0,1]^q$ into $K$ cubes, and let $\Pi_mg$ be the cell-average projection. For $x\in Q\in\mathcal Q_m$,
\begin{align*}
  |g(x)-(\Pi_mg)(x)|
  \le
  |Q|^{-1}\int_Q|g(x)-g(y)|\,\dd y
  \le
  Mm^{-s}.
\end{align*}
Consequently,
\begin{align*}
  \|g-\Pi_mg\|_1
  \le
  Mm^{-s}
  \le
  \frac\rho2.
\end{align*}
Since $\|g\|_1\ge\rho$, the triangle inequality gives
\begin{equation}
\label{eq:holder-L1-projection-signal}
\|\Pi_mg\|_1\ge\frac\rho2.
\end{equation}
For $Q\in\mathcal Q_m$, define $p_Q=\int_Qf$. Then
\begin{align*}
  \|p-u_K\|_1
  &=
  \sum_{Q\in\mathcal Q_m}
  \left|p_Q-\frac1K\right|\\
  &=
  \sum_{Q\in\mathcal Q_m}\left|\int_Qg\right|
  =
  \|\Pi_mg\|_1.
\end{align*}
By \Cref{eq:holder-L1-projection-signal}, $\|p-u_K\|_1\ge\rho/2$. Moreover,
\begin{align*}
  K=m^q
  \le
  \left(
  \left(\frac{2M}{\rho}\right)^{1/s}+1
  \right)^q
  \le
  C_{q,s,M}\rho^{-q/s},
\end{align*}
where we used $\lceil x\rceil\le x+1\le2x$ for $x\ge1$. Apply \Cref{prop:known-unif-upper} to the cell labels with $d=K$, $\delta=\rho/2$, and $\eta=\alpha\wedge\beta$. The required sample-size condition is implied by
\begin{align*}
  n
  \ge
  C_{q,s,M}
  \frac1{\rho^2}
  \left\{
  \rho^{-q/(2s)}\sqrt{x}+x
  \right\},
\end{align*}
or equivalently,
\begin{equation}
\label{eq:holder-L1-sufficient}
\rho^2
\ge
C_{q,s,M}
\left\{
\frac{\rho^{-q/(2s)}\sqrt{x}}n
+
\frac xn
\right\}.
\end{equation}
The first term in \Cref{eq:holder-L1-sufficient} is controlled whenever
\begin{align*}
  \rho
  \ge
  C_{q,s,M}
  n^{-2s/(4s+q)}x^{s/(4s+q)},
\end{align*}
and the second whenever
\begin{align*}
  \rho\ge C\left(\frac xn\right)^{1/2}.
\end{align*}
By \Cref{eq:x-comparable-J-holder-L1}, both conditions hold at $\rho=C_+\bar r=C_+r$ for a sufficiently large $C_+=C_+(q,s,M,c_*)$. The resulting test has type I error at most $\eta\le\alpha$ and, uniformly over $f\in\calH_1^s(M,B,\rho)$, type II error at most $\eta\le\beta$. Hence
\begin{align*}
  \rho_n^{(s,M,B)}(\alpha,\beta)\le C_+\bar r.
\end{align*}
Combining this with the lower bound proves the theorem.
\end{proof}

\end{document}